\definecolor{ForestGreen}{RGB}{34,139,34}
\newcommand{\mr}[1]{\ensuremath{\mathrm{#1}}}
\newcommand{\fnc}[1]{\ensuremath{\mathit{#1}}}
\newcommand{\bfnc}[1]{\ensuremath{\bm{\mathit{#1}}}}
\newcommand{\U}[0]{\ensuremath{\bfnc{U}}}
\newcommand{\E}[0]{\ensuremath{\fnc{E}}}
\newtheorem{theorem}{Theorem}
\newtheorem{remark}{Remark}
\newproof{proof}{Proof}
\journal{Applied Numerical Mathematics}
\begin{document}

\begin{frontmatter}


\author{Mohammed Sayyari\corref{cor1}\fnref{label2}}
\ead{malsayya@odu.edu}
\author{Nail K. Yamaleev\fnref{label2}}
\ead{nyamalee@odu.edu}
\fntext[label2]{Department of Mathematics and Statistics, Old Dominion University, Norfolk, VA, USA}
\cortext[cor1]{Corresponding author.}

\title{Implicit dual time-stepping positivity-preserving entropy{-}stable schemes for the compressible Navier{--}Stokes equations} 



\begin{abstract}
We generalize the explicit high-order positivity-preserving entropy{-}stable spectral collocation schemes developed in {\cite{upperman2023first} and \cite{yamaleev2023high}} for the three-dimensional (3D) compressible Navier{--}Stokes equations to a time{-}implicit formulation. The time derivative terms are discretized by using the first- and second-order implicit backward difference formulas (BDF1 and BDF2) that are well suited for solving steady-state and time-dependent viscous flows at high Reynolds numbers, respectively. The nonlinear system of discrete equations at each physical time step is solved by using a dual time-stepping technique. The proposed scheme is provably entropy{-}stable and positivity-preserving and provides unconditional stability properties in the physical time. Numerical results demonstrating {the} accuracy and positivity-preserving properties of the new dual time-stepping scheme are presented for supersonic viscous flows with strong shock waves and contact discontinuities. 
\end{abstract}



\begin{keyword}
high-order {spectral collocation} methods \sep
dual time-stepping schemes \sep
summation-by-parts (SBP) operators \sep
entropy stability \sep
positivity-preserving methods \sep
Brenner--Navier--Stokes regularization

\MSC[2020] 
65L06 \sep
65M12 \sep
65M70 \sep
76J20 \sep
76M25 

\end{keyword}

\end{frontmatter}


\section{Introduction}
\label{sec:introduction}


It is well known that entropy stability provides {the foundation for $L_2$ stability of Navier--Stokes simulations.  However, the existence of an entropy pair and the symmetrization of the Navier{--}Stokes equations, which are both required for proving the  entropy stability,  can be guaranteed only} if the thermodynamic variables are strictly positive in the entire space-time domain.  {The entropy stability and positivity} properties are especially critical in simulations of high-Mach- and high-Reynolds-number flows as they exhibit extreme compressibility and high-temperature effects. These effects are exacerbated in high-order solvers, as unresolved flow features lead {to well-known} Gibbs oscillations, {which may violate 
the positivity of thermodynamic variables and the entropy inequality.}
 Thus, developing high-order entropy{-}stable and positivity-preserving schemes is critical for solving viscous super- and hypersonic flows described by the compressible Navier--Stokes equations.


{ The primary focus of this paper is the development of a numerical scheme of arbitrary spatial order of accuracy for the compressible Navier--Stokes equations that provides unconditional stability in physical time,  satisfies a discrete entropy inequality,  and preserves the positivity of thermodynamic variables.}
Though there is a vast body of literature on bound-preserving schemes for hyperbolic conservation laws and the Euler equations (e.g., see the following survey papers~\cite{shu2018bound,zhang2011survey} 
and references therein), papers on high-order positivity-preserving methods for the compressible Navier{--}Stokes equations are rare ~\cite{upperman2023first,yamaleev2023high,zhang2017positivity,guermond2021second,upperman2021high,upperman2022positivity,yamaleev2022positivity,lin2023positivity}. All high-order positivity-preserving methods mentioned above are explicit in time, which imposes {a} Courant-Friedrichs-Lewy (CFL)-type constraint on the time step. Since high-Mach- and high-Reynolds-number flows are characterized by the presence of large gradients, enforcing the positivity and entropy stability relies on {a balance between} artificial dissipation and bounds on the time step~\cite{upperman2023first,yamaleev2023high}. Consequentially, this leads to very stiff problems, which incur higher computational costs. Therefore, a natural progression is to turn to implicit schemes that remedy the stiffness and can eliminate the CFL-type time step constraint caused by the explicit time integrators.
 

While nonlinear implicit solvers {mitigate} the stiffness present in viscous flow simulations, they do not inherently preserve positivity at each nonlinear iteration and, {therefore}, are not provably positivity-preserving. Herein,  we develop a new dual time-stepping scheme that provides provably positivity-preserving and entropy stability properties for the discretized 3D compressible Navier{--}Stokes equations. Dual time-stepping schemes allow for converting any time-dependent problem into a steady-state problem at each physical time step~\cite{belov1995new,Knoll1998,pandya2003implementation}. In the present work,  we develop a new implicit (in the physical time) dual time-stepping method based on explicit (in the pseudotime) positivity-preserving and entropy{-}stable schemes {introduced} in~\cite{upperman2023first,yamaleev2023high,upperman2021high,upperman2022positivity,yamaleev2022positivity}. Though using explicit schemes in the pseudotime is not novel, e.g., see~\cite{nakashima2014development,nordstrom2019dual}, what is new, is that the proposed dual time-stepping schemes both preserve the positivity of thermodynamic variables and satisfy the entropy inequality. One of the challenges in using explicit schemes is that the pseudotime step is bounded by the CFL-type condition, thus limiting the overall efficiency of the corresponding dual time-stepping scheme~\cite{arnone1993multigrid}. However, the bound on the pseudotime step required for positivity of density for the proposed dual time-stepping schemes is shown to be, in fact, higher than that of the corresponding purely explicit counterpart. While acknowledging some possible advantages of using high-order strong stability preserving (SSP) explicit Runge-Kutta methods ({e.g., see} \cite{GKS}) for discretizing the pseudotime derivative, in the present analysis, we consider {only} a first-order explicit update in the pseudotime and explore the extent of its potential and limitations.  Our numerical results demonstrate the accuracy and positivity-preserving properties of the proposed dual time-stepping schemes for both steady and unsteady viscous flows with strong shock waves and contact discontinuities. 


The paper is organized as follows.  We begin by defining the regularized Navier--Stokes equations in Section \ref{sec:rns}. Then,  the baseline explicit first-order positivity-preserving entropy{-}stable scheme is {briefly described} in Section \ref{sec:semi-sidcrete}. In Sections \ref{sec:dual_time-stepping} and \ref{sec:positivity},  we construct a new entropy{-}stable dual time-stepping scheme based on the first-order implicit backward difference (BDF) time integrator and derive bounds on the pseudotime step size required for enforcing the positivity of thermodynamic variables. Then, this first-order dual time{-}stepping positivity-preserving entropy{-}stable BDF1 scheme {is generalized} to high-order spatial discretizations in Section~\ref{sec:dual-high}. In Section~\ref{sec:bdf2-positivity},  we extend the proposed methodology to the implicit second-order BDF2 scheme by deriving bounds on the pseudotime step that guarantee the positivity of the density and internal energy. Lastly,  we verify and demonstrate the proposed schemes' positivity-preserving and entropy stability properties  on some benchmark test problems in Section \ref{sec:results}.


\section{Regularized Navier{--}Stokes equations}
\label{sec:rns}


There are no theoretical results in the literature showing that the compressible Navier{--}Stokes equations themselves guarantee the positivity of the thermodynamic variables. { As noted in the previous section, without these positivity properties, 
it cannot be proven that the Navier{--}Stokes equations are stable in the entropy sense.  In contrast to the classical Navier{--}Stokes model, the Brenner-Navier{--}Stokes equations introduced in~\cite{brenner2005navier} guarantee the global-in-time positivity of thermodynamic variables and satisfy a large class of entropy inequalities and the minimum entropy principle ~\cite{feireisl2010new,guermond2014viscous}. }
The Brenner--Navier--Stokes equations in curvilinear coordinates $(\xi_1,\xi_2, \xi_3)$ are given by
\begin{align}
    \label{eq:BNS_C}
    \frac{\partial(J \U)}{\partial t}
    + \sum\limits_{d=1}^{3} \frac{\partial {\bm F}_{d}^{(I)}}{\partial \xi_d} 
    &= \sum\limits_{d=1}^{3}\frac{\partial {\bm F}_{d}^{(B)}}{\partial \xi_d},
    \quad \forall \left(\xi_1,\xi_2,\xi_3\right)\in\ \hat{\Omega},
    \quad t> 0, \\
    \nonumber {\bm F}_{d} &\equiv \sum\limits_{i=1}^3 J \frac{\partial \xi_d}{\partial x_i} {\bm F}_{i} \quad
    \ {\rm for} \ d=1,2,3,
\end{align}
where $\U = \left[\rho,\rho v_1,\rho v_2,\rho v_3,\rho\E\right]^\top$ is a vector of the conservative variables ($\rho$~- density, $\rho v_i$ - momentum, and $\rho E$ - total energy), $J=\left|\frac{\partial(x_1, x_2, x_3)}{\partial(\xi_1, \xi_2, \xi_3)} \right|$ is the metric Jacobian, and $(x_1, x_2, x_3)$ are the Cartesian coordinates. Note that the above formulation can be directly used for fully unstructured hexahedral grids, because each convex hexahedral grid cell is individually mapped onto a reference element $(\xi_1, \xi_2, \xi_3) \in \hat{\Omega}=[-1, 1]^3$ in the computational domain, such that each individual transformation is a diffeomorphism. 
The Brenner viscous flux, ${\bm F}_{d}^{(B)}$, is defined as follows:
\begin{equation}\label{eq:FB}
 {\bm F}_{d}^{(B)}
    = {\bm F}_{d}^{(V)}
 + \sigma \frac{\partial \rho}{\partial x_d}
    \begin{bmatrix}
      1 & \bf{v} & \E 
    \end{bmatrix}^\top,
\end{equation}
where $\sigma$ is the volume diffusivity, ${\bf v}=\left[v_1, v_2, v_3\right]^\top$ is 
the velocity vector, and $E$ is the specific total energy.  In Eqs. (\ref{eq:BNS_C}-\ref{eq:FB}), ${\bm F}_{d}^{(I)}$ and ${\bm F}_{d}^{(V)}$ are the inviscid and viscous fluxes of the conventional Navier{--}Stokes equations, which are given by
\begin{equation}\label{eq:FNS}
 {\bm F}_{d}^{(I)}
 = \begin{bmatrix}
  \rho v_d \\
  \rho v_1 v_d + P\delta_{1d} \\
  \rho v_2 v_d + P\delta_{2d} \\
  \rho v_3 v_d + P\delta_{3d} \\
  (\rho\E + P)v_d
 \end{bmatrix},\quad
 {\bm F}_{d}^{(V)}
 = \begin{bmatrix}
  0 \\
  \tau_{1d} \\
  \tau_{2d} \\
  \tau_{3d} \\
  \tau_{id}v_i 
  + \kappa \frac{\partial T}{\partial x_d}
 \end{bmatrix},
\end{equation}
where $\delta_{id}$ is the Kronecker delta, $P=\rho R_g T$ is the pressure, $R_g$ is the gas constant, $T$ is the temperature, and $\kappa$ is the heat conductivity. The stress tensor is given by
\begin{equation}
\nonumber
 \tau_{id} = \mu \left( \frac{\partial v_i}{\partial x_d} + \frac{\partial v_d}{\partial x_i} - \frac{2}{3}\delta_{id}\frac{\partial v_l}{\partial x_l}\right),
\end{equation}
where $\mu$ is the dynamic viscosity. For brevity, the Einstein notation is used where appropriate, such as in Eq~\eqref{eq:FNS}, $\tau_{id}v_i:=\sum_{i=1}^3 \tau_{id}v_i$. 

{
To achieve the positivity of the thermodynamic variables,  we regularized the Navier{--}Stokes system by adding artificial dissipation in the form of the diffusion operator of its Brenner counterpart given by Eqs. (\ref{eq:BNS_C}-\ref{eq:FB}), thus leading to}
\begin{equation}\label{rns3d}
 \frac{\partial(J \U)}{\partial t}
 + \sum\limits_{d=1}^{3} \frac{\partial {\bm F}_{d}^{(I)}}{\partial \xi_d} 
 = \sum\limits_{d=1}^{3} \left[
 \frac{\partial {\bm F}_{d}^{(V)}}{\partial \xi_d}
 + \frac{\partial {\bm F}_{d}^{(AD)}}{\partial \xi_d}
 \right],
\end{equation}
where the artificial dissipation flux ${\bm F}_{d}^{(AD)}$ can be obtained from the viscous flux of the Brenner{--}Navier{--}Stokes equations, ${\bm F}_{d}^{(B)}$, by setting $\mu=\mu^{AD}$, $\sigma = c_\rho \mu^{AD}/\rho$, and $\kappa = c_T\mu^{AD}$. The artificial viscosity coefficient, $\mu^{AD}$,
is defined so that it is design-order small in regions where the solution has enough regularity and first-order accurate at strong discontinuities. Further details on how $\mu^{AD}$ is constructed are available in \cite{upperman2023first,upperman2022positivity}. For all test problems presented herein,
the tunable coefficients, $c_\rho$ and $c_T$, are set equal to $0.9$ and $\frac{c_\rho}{\gamma-1}$, respectively.

A necessary condition for selecting a unique, physically relevant solution among possibly many weak solutions of the compressible Navier{--}Stokes equations is the entropy inequality. The original and regularized Navier{--}Stokes equations are both equipped with the same convex scalar entropy function $\mathcal{S}= -\rho s$ and entropy flux $\mathcal{F}=-\rho s \bm{v}$, where $s$ is the thermodynamic entropy. The following inequality can be shown to hold for both the conventional and regularized Navier{--}Stokes equations, assuming the corresponding boundary conditions are entropy-stable (e.g., see~\cite{yamaleev2019entropy}):
\begin{equation}\label{eq:Eineq.3}
  \int_{\hat\Omega} \frac{\partial ( J\mathcal{S})}{\partial t}\mr{d}\hat\Omega
 = \frac{d}{d t} \int_{\hat\Omega} J \mathcal{S}\mr{d}\hat\Omega
  \leq 0.
\end{equation} 
{ Note that all boundary conditions used in the numerical experiments presented in Section \ref{sec:results},  except for the subsonic outflow condition,  are provably entropy-stable.

Since the Brenner and regularized Navier--Stokes equations have nearly identical inviscid and viscous terms, the positivity of the thermodynamic variables in Eq.~(\ref{rns3d}) can be shown in the same  manner as in \cite{feireisl2010new}.  Therefore,}
along with the entropy inequality given by Eq.~(\ref{eq:Eineq.3}), the regularized Navier{--}Stokes equations (Eq.~\eqref{rns3d}) preserve the positivity of thermodynamic variables. 


\section{Semi-discrete 1st-order positivity-preserving entropy{-}stable \\ scheme}
\label{sec:semi-sidcrete}

We start by outlining a baseline semi-discrete first-order positivity-preser-ving entropy{-}stable finite volume (FV) scheme {introduced in \cite{upperman2023first,upperman2021high}} for the regularized Navier{--}Stokes equations (\ref{rns3d}) discretized on high-order hexahedral Legendre-Gauss-Lobatto (LGL) grids.  {A notation similar to that} used in \cite{upperman2023first} is {adopted herein to facilitate} reference to this baseline scheme. The way how {this} baseline scheme can be extended to attain high-order positivity-preserving properties {in space} and unconditional stability in the physical time will be presented in Sections \ref{sec:dual_time-stepping} - \ref{sec:bdf2-positivity}.


\subsection{SBP operators}
\label{sec:sbp}

First, we briefly outline 1D summation-by-parts (SBP) operators used to discretize Eq.~(\ref{rns3d}). The physical domain is divided into $N_{\rm elem}$ non-overlapping discontinuous elements, $[x_1^j, x_{N_p}^j]$, such that $x_1^j=x_{N_p}^{(j-1)}$ for $j=2,\dots,N_{\rm elem}$. The solution of order $p$ in each cell is approximated on $N_p=p+1$ Legendre-Gauss-Lobatto (LGL) points, ${\bf x}^j = \left[x_1^j, \dots, x_{N_p}^j \right]^\top$ (referred to as solution points) for $j=1,\dots,N_{\rm elem}$. This representation provides us with a set of operators, including a quadrature, $\mathcal{P}$, a 1st-derivative differentiation operator, $\mathcal{D}$, and a stiffness matrix, $\mathcal{Q}$. In this work, we restrict ourselves to the diagonal-norm LGL operators only. The main properties of these operators for a fixed order $p$ are as follows.
\begin{enumerate}
  \item For any vector ${\bf x}^l = \left[x_1^l,\dots, x_{N_p}^l\right]^\top$ and powers $l=0, 1,\dots, p$, $\mathcal{D}{\bf x}^l = \mathcal{P}^{-1}\mathcal{Q}{\bf x}^l = l{\bf x}^{l-1}$.
  \item $\mathcal{P}$ is a symmetric positive definite (SPD) matrix.
  \item $\mathcal{Q}+\mathcal{Q}^\top=\mathcal{B}$, where $\mathcal{B} = {\rm diag}(-1,0,\dots,0,1)$.
\end{enumerate}
{A} comprehensive discussion of SBP operators {can be found in}~\cite{fernandez2014review}.

Along with the solution points, we also use an additional set of intermediate points, $\bar{\bf x}^j = \left[\bar{x}_0^j, \dots, \bar{x}_{N_p}^j \right]^\top$ for $j=1,\dots,N_{\rm elem}$. These points, which are referred to as flux points, form a complementary grid whose spacing is equal to the diagonal elements of the positive definite mass matrix $\mathcal{P}$, i.e., 
\begin{equation}\label{eq:fluxpoints}
\bar{x}_{i} - \bar{x}_{i-1} = \mathcal{P}_{ii} \ {\rm for} \ i=1,\dots, N_p. 
\end{equation}
The flux points are instrumental for constructing the first-order positivity-preserving entropy{-}stable scheme defined on high-order LGL elements, which will be discussed in Section \ref{sec:first-order}. As has been proven in \cite{fisher2013discretely}, any 1D SBP discrete differentiation operator $\mathcal{D} = \mathcal{P}^{-1} \mathcal{Q}$ presented above can be recast into the following telescopic flux form:
\begin{equation}
\nonumber
  \mathcal{P}^{-1} \mathcal{Q} {\bf f} 
  = \mathcal{P}^{-1} \Delta \bar{\bf f},
\end{equation}
where $\Delta$ is a $N_p\times (N_p +1)$ matrix corresponding to the two-point backward difference operator, and $\bar{\bf f}$ is a $p$th-order flux vector defined at the flux points~\cite{fisher2013discretely,carpenter2014entropy,fisher2011boundary}. 

For unstructured hexahedral grids, the one-dimensional SBP operators defined on each LGL element can be straightforwardly extended to three spatial dimensions by using tensor product arithmetic (e.g., see~\cite{carpenter2016entropy}). Hereafter, the multidimensional SBP operators defined in the computational domain are denoted with subscripts $\cdot_{d}$, where $d$ is the $d$-th computational coordinate for $d = 1, 2, 3$. Because the scheme is developed { in} three spatial dimensions, with each coordinate defined by the index $d$ and {each element containing} $N_p$ LGL points in {every} direction,  we use the notation $i_d$ to denote the $i$-th LGL point and $\overline{i}_d$ to denote the $i$-th flux point in the $d$-th curvilinear coordinate. To refer to a specific quantity at a point,  we use the notation $\cdot_{i_1,i_2,i_3}$, where $i_1$, $i_2$, and $i_3$ are  indices of the quantity in the first, second, and third coordinates, respectively.


\subsection{Baseline 1st-order positivity-preserving entropy{-}stable scheme}
\label{sec:first-order}

To guarantee the positivity of thermodynamic variables in the presence of strong discontinuities,  we use the first-order positivity-preserving entropy{-}stable finite volume (FV) scheme introduced {in~\cite{upperman2023first}} for the 3D compressible Navier{--}Stokes equations. 
This first-order scheme is constructed in a finite volume manner on the high-order LGL solution points with the flux points acting as control volume edges and can be written in the semi-discrete form as follows:
\begin{equation}\label{eq:first-order} 
  (\hat{{\bf u}}_1)_t
  = \sum\limits_{ d=1}^{3} \left(
  -\mathcal{P}^{-1}_{d} \Delta_{d} \left[ 
  \hat{\bar{{\bf f}}}^{1(I)}_d 
  - \hat{\bar{{\bf f}}}^{1(AD)}_{\hat{\bar{\sigma}},d}
  - \hat{\bar{{\bf f}}}^{1(AD)}_d 
  \right] 
  + \mathcal{D}_{d}\hat{{\bf f}}^{p(V)}_d \right)
  + \sum\limits_{ d=1}^{3} \mathcal{P}^{-1}_{d} 
  \hat{{\bf g}}^1_d, 
\end{equation} 
where {$\hat{{\bf u}}_1= [J]{\bf u}_1$,  $[J]$ is a diagonal matrix composed out of the metric Jacobian computed at the corresponding solution points,}
$\hat{\bar{{\bf f}}}^{1(I)}_d$, $\hat{\bar{{\bf f}}}^{1(AD)}_{\hat{\bar{\sigma}},d}$, and $\hat{\bar{{\bf f}}}^{1(AD)}_d$ are first-order inviscid and artificial dissipation fluxes, $\hat{{\bf f}}^{p(V)}_d$ is a high-order physical viscous flux associated with the $d$-th coordinate, and $\hat{{\bf g}}^1_d$ represents inviscid, viscous, and artificial dissipation penalties \cite{upperman2023first}. 

Note that the discretization of the first-order inviscid fluxes on high-order LGL elements satisfies the geometric conservation law (GCL) equations \cite{upperman2023first,thomas1979geometric}. The first-order inviscid fluxes in Eq.~(\ref{eq:first-order}) are discretized by using an approximation that was first introduced in~\cite{upperman2021high}. These inviscid fluxes are represented as follows: $\hat{\bar{{\bf f}}}^{1(I)}_d = \hat{\bar{{\bf f}}}^{(EC)}_d - \hat{\bar{{\bf f}}}^{(ED)}_d$, where $\hat{\bar{{\bf f}}}^{(EC)}_d $ is a two-point entropy{-}conservative flux and $\hat{\bar{{\bf f}}}^{(ED)}_d$ is an entropy{-}dissipative characteristic flux developed in~\cite{merriam1989entropy}, which is constructed so that it facilitates the pointwise density positivity~\cite{upperman2021high}. The entropy{-}conservative flux, $\hat{\bar{{\bf f}}}^{(EC)}_1$, is defined as follows:
\begin{equation}
\label{1ST_ECFLUX}
  \left\{
  \begin{array}{ll}
  \hat{\bar{{\bf f}}}^{(EC)}_1(\vec{\xi}_{\overline{i}_d})
  = \bar{{ f}}_{S}\left({\bf u}_1(\vec{\xi}_{i_d}),{\bf u}_1(\vec{\xi}_{i_d+1})\right) \hat{\bar{{\bf a}}}^1(\vec{\xi}_{\overline{i}_d}), 
  & \text{for} \, \, 1 \leq \overline{i}_d \leq N_p-1,\\
  \hat{\bar{{\bf f}}}^{(EC)}_1(\vec{\xi}_{\overline{i}_d}) = \bar{{ f}}_{S}\left({\bf u}_1(\vec{\xi}_{\overline{i}_d}),{\bf u}_1(\vec{\xi}_{\overline{i}_d})\right) \hat{\bar{{\bf a}}}^1(\vec{\xi}_{\overline{i}_d}), & \text{for} \, \, \overline{i}_d \in \{0,N_p \}, 
  \end{array}
  \right.
\end{equation}
\begin{equation*}
  \hat{\bar{{\bf a}}}^1(\vec{\xi}_{\overline{i}_d})
  =\sum\limits_{ i=i_d+1}^{N_p} \sum\limits_{ l=1}^{i_d} 2 q_{l,i}
  \frac{\hat{{\bf a}}^1(\vec{\xi}_l) +\hat{{\bf a}}^1(\vec{\xi}_i)}{2}, 
\end{equation*}
where $\hat{{\bf a}}^1$ is the $p$th-order approximation of $J\frac{\partial \xi_1}{\partial {\bf x}}$ satisfying the GCL equations and $\bar{{ f}}_{S}(\cdot,\cdot)$ is any two-point, entropy{-}conservative inviscid flux. The entropy{-}conservative fluxes $\hat{\bar{{\bf f}}}^{(EC)}_2$ and $\hat{\bar{{\bf f}}}^{(EC)}_3$ are defined by similar expressions with the corresponding metric approximation, $\hat{{\bf a}}^2$ and $\hat{{\bf a}}^3$, respectively (for further details, see \cite{upperman2023first}). In the present analysis,  we use the two-point entropy{-}conservative flux developed in~\cite{chandrashekar2013kinetic}. The proofs that the above discretization of the inviscid fluxes provides entropy stability, preserves the positivity of density, and satisfies the GCL equations can be found in \cite{upperman2023first}.

The first-order artificial dissipation fluxes are split into two terms. The $\hat{\bar{{\bf f}}}^{1(AD)}_d$ flux is computed based on the artificial viscosity $\bfnc{\mu}^{AD}$, where the artificial viscosity coefficient is evaluated as the arithmetic average of the corresponding values at the neighboring solution points. The $\hat{\bar{{\bf f}}}^{1(AD)}_{\hat{\bar{\sigma}},d}$ flux is introduced to add mass diffusion guaranteeing density positivity~\cite{upperman2023first}. The physical viscous fluxes $\hat{{\bf f}}^{p(V)}_d $ in Eq.~(\ref{eq:first-order}) are discretized by the same high-order SBP operators used for the high-order entropy{-}stable scheme briefly outlined in Section \ref{sec:high-order-entropy-stable}.


\section{Dual time-stepping method}
\label{sec:dual_time-stepping}

Most, if not all, positivity-preserving entropy{-}stable schemes discretize the time derivative term in the governing equations {by high-order} explicit SSP Runge-Kutta methods. Note, however, that these explicit time integrators impose a CFL-type condition on the time step, which may become stiff for viscous flow simulations. To eliminate this stiffness and preserve the positivity properties,  we propose to discretize the time derivative term in Eq.(\ref{eq:first-order}) by using a dual time-stepping implicit backward difference (BDF) method. 

The main idea of the dual time-stepping method is to introduce an additional derivative term in the pseudotime into the discretized governing equations. As a result, the nonlinear discrete equations at each physical time {step can} be solved by marching to the steady state in the pseudotime. Thus, the dual time-stepping method converts the unsteady problem in the physical time to a steady-state problem in the pseudotime at each physical time step~\cite{merkle1987time}. In the present analysis, the dual time-stepping method {is used} to discretize the physical time derivative by using first- and second-order implicit backward difference (BDF) schemes benefiting from their unconditional stability properties and the pseudotime derivative with an explicit SSP scheme benefiting from its positivity-preserving properties. This approach allows us to derive explicit bounds on the pseudotime step size required for the positivity of thermodynamic {variables}. 
 
 In this section, a new dual time-stepping first-order backward difference (BDF1) scheme that is also first-order accurate in space {is constructed}. Then, this dual time-stepping methodology {is extended} to the corresponding positivity-preserving scheme in Section~{\ref{sec:positivity}} and generalize it to high-order spatial discretizations and the second-order BDF time integrator in Sections~\ref{sec:dual-high} and {\ref{sec:bdf2-positivity}}, respectively.  
 
 The first-order dual time-stepping BDF1 scheme can be written in the following semi-discrete form:
\begin{equation}\label{eq:pseudostep}
  \frac{\partial \hat{\bf u}_1^*}{\partial\tau} = - \frac{\hat{\bf u}_1^* - \hat{\bf u}_1^n}{\Delta t_n} + {\bf R}_1^*,
\end{equation}
where $\hat{\bf u}_1^*$ is a semi-discrete  {solution} in the pseudotime, the ${\bf R}_1^*$ term represents the spatial discretization used, i.e., the right-hand side of Eq.(\ref{eq:first-order}) evaluated at $\hat{\bf u}_1^*$, {$\Delta t_n=t^{n+1}-t^n$} is a physical time step size, and $n$ is {a} current physical time level. When the {pseudotime} derivative converges to zero, i.e., $(\hat{\bf u}_1^*)_{\tau} \rightarrow {\bf 0}$, $\hat{\bf u}_1^*$ converges to ${\hat{\bf u}}_1^{n+1}$ and Eq.~(\eqref{eq:pseudostep}) becomes the standard BDF1 scheme. To obtain an explicit update in the pseudotime and achieve a more favorable bound on the pseudotime step size required for positivity (see Section \ref{sec:positivity}), the $\frac{\partial \hat{\bf u}_1^*}{\partial\tau}$ term in Eq.~(\ref{eq:pseudostep}) is discretized by using the first-order explicit Euler formula and $\hat{\bf u}_1^*$ is set be the solution at the next pseudotime level. Thus, the implicit BDF1 dual time-stepping scheme is given by
\begin{equation}\label{eq:semi-implicit-update}
  \hat{\bf u}_1^{k+1} = \hat{\bf u}_1^k + \Delta\tau_k \left(- \frac{\hat{\bf u}_1^{k+1} - \hat{\bf u}_1^n}{\Delta t_n} + {\bf R}_1^k\right),
\end{equation}
where {$\Delta\tau_k=\tau^{k+1}-\tau^k$} is a grid spacing at the $k$-th pseudotime level,
${\bf R}_1^k$ is the right-hand side of Eq.(\ref{eq:first-order}) evaluated at the current pseudotime level $k$, and $\hat{\bf u}_1^n$ is the discrete solution at the $n$-th physical time level, which remains unchanged during pseudotime iterations. 
\begin{remark}
An alternative approach is to set $\hat{\bf u}_1^*$ equal to the solution at the current pseudotime level, thus leading to
$$
  \hat{\bf u}_1^{k+1} = \hat{\bf u}_1^k + \Delta\tau_k \left(- \frac{\hat{\bf u}_1^{k} - \hat{\bf u}_1^n}{\Delta t_n} + {\bf R}_1^k\right).
$$
Note, however, that a constraint that should be imposed on the pseudotime step to guarantee the density positivity in this case is stricter than that of its counterpart with the explicit Euler integrator in the physical time. Therefore, this scheme is not considered in the present analysis.
\end{remark}

Solving Eq.~(\ref{eq:semi-implicit-update}) for $\hat{\bf u}_1^{k+1}$
yields the following explicit update formula in the pseudotime:
\begin{equation}
 \label{eq:bdf1-dual-time-stepping-update}
  \hat{\bf u}_1^{k+1} 
  = \frac{\Delta t_n}{\Delta t_n + \Delta \tau_k} \hat{\bf u}_1^k 
  + \frac{\Delta \tau_k}{\Delta t_n + \Delta \tau_k} \hat{\bf u}_1^n 
  + \frac{\Delta t_n \Delta \tau_k}{\Delta t_n + \Delta \tau_k} {\bf R}_1^k.
\end{equation}
This update formula is used to converge the solution to the steady-state solution in the pseudotime, 
which can be interpreted as an iterative solver for solving the nonlinear discrete equations at each physical time step.

For the sake of brevity,  we hereafter omit the subscripts $n$ and $k$ in $\Delta t_n$ and $\Delta \tau_k$ and use the following notation:
\begin{equation}
\label{eq:Ctau}
  \hat{\bf u}_1^{k,n} = \hat{\bf u}_1^k + \frac{\Delta \tau}{\Delta t} \hat{\bf u}_1^n,
  \quad C_1^{\tau} = \frac{\Delta t}{\Delta t + \Delta \tau}.
\end{equation}
With this notation, the dual time-stepping BDF1 scheme given by Eq.(\ref{eq:bdf1-dual-time-stepping-update}) becomes
\begin{equation}
 \label{eq:bdf1-update}
 \hat{\bf u}_1^{k+1}
 = C_1^{\tau}\left(\hat{\bf u}_1^{k,n} + \Delta\tau{\bf R}_1^k
 \right),
\end{equation}
where $\hat{\bf u}_{1}^{k+1}=[J]{\bf u}^{k+1}_{1}$, $\hat{\bf u}^{n}_{1}=[J]{\bf u}^{n}_{1}$. 


\section{First-order dual time-stepping positivity-preserving entropy{-}stable scheme}
\label{sec:positivity}

The dual time-stepping BDF1 scheme given by Eq.~\eqref{eq:bdf1-update} very closely resembles the update formular of the explicit Euler scheme. Therefore, the positivity of thermodynamics variables of the dual time-stepping BDF1 scheme can be proven in a similar way to that of the explicit Euler scheme reported in \cite{upperman2023first}. 
These density and temperature positivity proofs are presented next.


\subsection{Positivity of density}
\label{sec:density-positivity}

Let us prove that the dual time-stepping scheme given by Eq.~\eqref{eq:bdf1-update} is positivity-preserving at every pseudotime step if some bounds are imposed on $\Delta \tau$. Consequently, this scheme also preserves the positivity of the thermodynamic variables at each physical time step over the entire time interval of interest. Denoting $\hat{\rho}(x_{i_1i_2i_3})=J_{i_1i_2i_3}\rho(\xi_{i_1i_2i_3})$ and using Eq. (\ref{eq:fluxpoints}), the continuity equation in the 1st-order scheme (Eq.(\ref{eq:first-order})) is given by
\begin{equation}\label{eq:cont}
 (\hat{\rho})_t 
 = - \sum_{d=1}^{3} \frac{\hat{\bar{f}}^{\rho+}_{i_d}
 - \hat{\bar{f}}^{\rho-}_{i_d}}{\Delta \bar{\xi}_{i_d}},
\end{equation}
where $i_d$ is the $i$-th solution node in the $d$-th spatial direction and $\Delta \bar{\xi}_{i_d}=\bar{\xi}_{i_d+1}-\bar{\xi}_{i_d}$ is the distance between the neighboring flux points in the computational domain. The numerical fluxes $\hat{\bar{f}}^{\rho\pm}_{i_d}$ in Eq.(\ref{eq:cont}) are defined as follows:
\begin{equation}\label{eq:flux}
 \hat{\bar{f}}^{\rho\pm}_{i_d}
 = \hat{\bar{m}}_{i_d}^{\pm} - \mathscr{D}_{i_d}^{\pm} \Delta_{i_d}^{\pm} {\rho},
\end{equation}
where $\Delta_{i_d}^+{\rho}={\rho}_{i_d+1}-{\rho}_{i_d}$ and $\Delta_{i_d}^-{\rho}={\rho}_{i_d}-{\rho}_{i_d-1}$,
 $\hat{\bar{m}}_{i_d}^{+}$ and $\hat{\bar{m}}_{i_d}^{-}$ are the momentums associated with the entropy{-}conservative flux given by Eq.~\eqref{1ST_ECFLUX} based on ${\bf u}_1(\xi_{i_d})$, ${\bf u}_1(\xi_{i_d+1})$ and  ${\bf u}_1(\xi_{i_d-1})$, ${\bf u}_1(\xi_{i_d})$,respectively, $\mathscr{D}_{i_d}^{\pm}$ is the corresponding dissipation coefficient whose minimum value is given by $\mathscr{D}_{i_d, \min}^{\pm} = \frac{|\hat{\bar{m}}^{\pm}_{i_d}|}{2{\rho}_{i_d,A}^{\pm}}$, ${\rho}_{i_d,A}^{\pm}=({\rho}_{i_d}+{\rho}_{i_d\pm 1})/2$, and the bar ($\bar{\cdot}$) indicates a quantity defined at a given flux point (for further details, see~\cite{upperman2023first}). 
 
The following theorem presents a sufficient condition that needs to be imposed on the pseudotime step size $\Delta \tau $ in the first-order scheme (Eq. \eqref{eq:first-order}) whose time derivative term is discretized by the dual time-stepping BDF1 method (Eq.\ref{eq:bdf1-update})) to guarantee the pointwise positivity of density. 
\begin{theorem}
\label{COR:posDensMassDiff_1stOrder}
  If the $\hat{\bar{f}}^{\rho\pm}_{i_d}$ flux is defined by Eq.~(\ref{eq:flux}) with $\mathscr{D}^{\pm}_{i_d} \geq \mathscr{D}^{\pm}_{{i_d},\min}= \frac{|\hat{\bar{m}}^{\pm}_{i_d}|}{2{\rho}_{i_d,A}^{\pm}}$, then
 the first-order dual time-stepping scheme given by Eq.~(\ref{eq:bdf1-update}) preserves the positivity of density under the following constraint on $\Delta \tau$: 
  \begin{equation}\label{eq:density-positivity}
    \Delta\tau <
    \min\limits_{i_1, i_2, i_3}\frac{1}
    {\frac 2{J_{i_1i_2i_3}} \sum\limits_{d=1}^{3}
    \frac{\mathscr{D}^{+}_{i_d} + \mathscr{D}^{-}_{i_d}}{\Delta \bar{\xi}_{i_d}}
    - \frac{1}{\Delta t} \frac{\rho^n}{\rho^k}}
    = \Delta\tau^{\rho}_1,
   \end{equation}
 { where ${\rho}^{k}=\rho_{i_1i_2i_3}^{k}$ and ${\rho}^{n}=\rho_{i_1i_2i_3}^{n}$. }
\end{theorem}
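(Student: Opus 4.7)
My plan is a pointwise induction on the pseudotime level $k$. At stage one, I extract the continuity component of \eqref{eq:bdf1-update} at an arbitrary solution node $(i_1,i_2,i_3)$: dividing by $J_{i_1i_2i_3}$ and using \eqref{eq:cont} for the density residual gives
\begin{equation*}
  \rho^{k+1} \;=\; C_{1}^{\tau}\!\left[\rho^{k} + \frac{\Delta\tau}{\Delta t}\,\rho^{n} - \frac{\Delta\tau}{J_{i_1i_2i_3}}\sum_{d=1}^{3}\frac{\hat{\bar{f}}^{\rho+}_{i_d}-\hat{\bar{f}}^{\rho-}_{i_d}}{\Delta\bar{\xi}_{i_d}}\right].
\end{equation*}
Since $C_{1}^{\tau}=\Delta t/(\Delta t+\Delta\tau)>0$, proving the theorem reduces to showing that the bracket is strictly positive whenever $\rho^{k}>0$ and $\rho^{n}>0$.

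Stage two is an algebraic regrouping combined with a one-line consequence of the dissipation hypothesis. Substituting \eqref{eq:flux} into the flux difference in direction $d$ and rearranging gives
\begin{equation*}
  -\bigl(\hat{\bar{f}}^{\rho+}_{i_d}-\hat{\bar{f}}^{\rho-}_{i_d}\bigr) \;=\; \bigl[\mathscr{D}^{+}_{i_d}\rho_{i_d+1}-\hat{\bar{m}}^{+}_{i_d}\bigr] + \bigl[\hat{\bar{m}}^{-}_{i_d}+\mathscr{D}^{-}_{i_d}\rho_{i_d-1}\bigr] - (\mathscr{D}^{+}_{i_d}+\mathscr{D}^{-}_{i_d})\rho^{k}.
\end{equation*}
The hypothesis $\mathscr{D}^{\pm}_{i_d}\ge|\hat{\bar{m}}^{\pm}_{i_d}|/(2\rho^{\pm}_{i_d,A})$ rewrites as $|\hat{\bar{m}}^{\pm}_{i_d}|\le\mathscr{D}^{\pm}_{i_d}(\rho_{i_d}+\rho_{i_d\pm1})$, which immediately delivers the two worst-case neighbor estimates $\mathscr{D}^{+}_{i_d}\rho_{i_d+1}-\hat{\bar{m}}^{+}_{i_d}\ge-\mathscr{D}^{+}_{i_d}\rho^{k}$ and $\mathscr{D}^{-}_{i_d}\rho_{i_d-1}+\hat{\bar{m}}^{-}_{i_d}\ge-\mathscr{D}^{-}_{i_d}\rho^{k}$. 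This is the same trick used for the explicit Euler scheme in \cite{upperman2023first}.

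Stage three is mechanical: inserting the lower bounds into the bracket and collecting the coefficient of $\rho^{k}$ produces
\begin{equation*}
  \rho^{k+1} \;\ge\; C_{1}^{\tau}\,\rho^{k}\!\left[\,1 - \Delta\tau\!\left(\frac{2}{J_{i_1i_2i_3}}\sum_{d=1}^{3}\frac{\mathscr{D}^{+}_{i_d}+\mathscr{D}^{-}_{i_d}}{\Delta\bar{\xi}_{i_d}} - \frac{1}{\Delta t}\frac{\rho^{n}}{\rho^{k}}\right)\right],
\end{equation*}
which is strictly positive iff $\Delta\tau$ is strictly less than the reciprocal of the parenthesized expression at $(i_1,i_2,i_3)$. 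Taking the minimum over all solution nodes yields the global bound \eqref{eq:density-positivity} and closes the induction.

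The main obstacle, in my view, is the bookkeeping in stage two: one must regroup the flux divergence consistently across all three curvilinear directions, respect the sign conventions on $\hat{\bar{m}}^{\pm}_{i_d}$ so that each momentum term pairs with the correct neighbor diffusion term, and discard the remaining nonnegative $\rho_{i_d\pm1}$-contributions as a worst-case estimate without mishandling them as cross-terms. Everything downstream — including the observation that the $\rho^{n}/\Delta t$ term only relaxes the constraint relative to the explicit Euler case — follows routinely.
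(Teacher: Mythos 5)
Your proposal is correct and follows essentially the same route as the paper: both reduce to the pointwise continuity update, use the hypothesis $|\hat{\bar{m}}^{\pm}_{i_d}|\le 2\mathscr{D}^{\pm}_{i_d}\rho^{\pm}_{i_d,A}$ to bound each signed flux contribution from below by $-\mathscr{D}^{\pm}_{i_d}\rho^{k}$, and collect the coefficient of $\rho^{k}$ to obtain the stated $\Delta\tau$ bound. The only cosmetic difference is that the paper splits $\hat{\rho}^{k,n}$ into six equal pieces paired with the six flux terms before summing back, whereas you bound the full flux divergence directly; the estimates and the conclusion are identical.
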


\begin{proof}
The conservation of mass equation discretized by the dual time-stepping BDF1 scheme (Eq.(\ref{eq:bdf1-update})) at each solution point is given by
\begin{equation}
 \label{eq:bdf1-density-update}
 \hat{\rho}^{k+1}
 = C_1^{\tau}\left(\hat{\rho}^{k,n} 
 - \Delta\tau \sum\limits_{d=1}^{3}
 \frac{\hat{\bar{f}}^{\rho+}_{i_d}-\hat{\bar{f}}^{\rho-}_{i_d}}{\Delta \bar{\xi}_{i_d}}\right),
\end{equation}
where $\hat{\rho}^{k+1}=J_{i_1i_2i_3}\rho_{i_1i_2i_3}^{k+1}$ and $\hat{\rho}^{n}=J_{i_1i_2i_3}\rho_{i_1i_2i_3}^{n}$. 
The update formula given by Eq.~(\ref{eq:bdf1-density-update}) can be split as
{
\begin{equation}
\nonumber
  \hat{\rho}^{k+1} = C_1^{\tau}
  \left(
  \left[
    \frac{\hat{\rho}^{k,n}}{6} 
    - \Delta\tau\frac{\hat{\bar{f}}^{\rho+}_{i_1}}{\Delta \bar{\xi}_{i_1}}
  \right]
+
   \left[
    \frac{\hat{\rho}^{k,n}}{6} 
    + \Delta\tau\frac{\hat{\bar{f}}^{\rho-}_{i_1}}{\Delta \bar{\xi}_{i_1}}
    \right]
   + \cdots
    \right)
\end{equation}
In the above formula,  only the splitting in the $\xi_1$ direction is shown, while} the same splitting is applied in the $\xi_2$ and $\xi_3$ coordinate directions. 
The $''+''$ and $''-''$ terms can be recast in the following form:
\begin{align}
\nonumber
  \left(
    \frac{\hat{\rho}^{k,n}}{6} 
    - \Delta\tau\frac{\hat{\bar{f}}^{\rho+}_{i_d}}{\Delta \bar{\xi}_{i_d}}
  \right) &= 
  \frac{\hat{\rho}^{k,n}}{6} 
  - \Delta\tau\frac{1}{\Delta \bar{\xi}_{i_d}}
  (\hat{\bar{m}}^+_{i_d} - \mathscr{D}^+_{i_d}\Delta^+_{i_d}\rho),\\
\nonumber
  \left(
    \frac{\hat{\rho}^{k,n}}{6} 
    + \Delta\tau\frac{\hat{\bar{f}}^{\rho-}_{i_d}}{\Delta \bar{\xi}_{i_d}}
  \right) &=
  \frac{\hat{\rho}^{k,n}}{6} 
  + \Delta\tau\frac{1}{\Delta \bar{\xi}_{i_d}}
  (\hat{\bar{m}}^-_{i_d} - \mathscr{D}^-_{i_d}\Delta^-_{i_d}\rho),
\end{align}
for $d=1,2,3$.

Since $\hat{\bar{m}}^+_{i_d}$ and $\hat{\bar{m}}^-_{i_d}$ are scalars, $-\hat{\bar{m}}^+_{i_d}\geq-|\hat{\bar{m}}^+_{i_d}|$ and $\hat{\bar{m}}^-_{i_d}\geq-|\hat{\bar{m}}^-_{i_d}|$. Taking into account that $\mathscr{D}^{\pm}_{i_d} \geq \frac{|\hat{\bar{m}}^{\pm}_{i_d}|}{2{\rho}_{i_d,A}^{\pm}}$ and assuming ${\mathscr{D}^\pm_{i_d}}>0$ yield
\begin{align}
\nonumber
  \begin{split}
    \frac{\hat{\rho}^{k,n}}{6} 
    - \Delta\tau\frac{1}{\Delta \bar{\xi}_{i_d}}
    (\hat{\bar{m}}^+_{i_d} - \mathscr{D}^+_{i_d}\Delta^+_{i_d}\rho)
    &\geq \frac{\hat{\rho}^{k,n}}{6} 
    - \Delta\tau\frac{\mathscr{D}^+_{i_d}}{\Delta \bar{\xi}_{i_d}}
    \left(\frac{|\hat{\bar{m}}^+_{i_d}|}{\mathscr{D}^+_{i_d}} - \Delta^+_{i_d}\rho\right)\\
    &\geq \frac{\hat{\rho}^{k,n}}{6} 
    - \Delta\tau\frac{\mathscr{D}^+_{i_d}}{\Delta \bar{\xi}_{i_d}}
    \left(2\rho^+_{j,A} - \Delta^+_{i_d}\rho\right)\\
    &= \frac{\hat{\rho}^{k,n}}{6} 
    - \Delta\tau\frac{2\mathscr{D}^+_{i_d}}{\Delta \bar{\xi}_{i_d}}
    \rho^k\\
    = \rho^k &\left[
      \frac{J_{i_1i_2i_3}}{6} - \Delta\tau\left(
        \frac{2\mathscr{D}^+_{i_d}}{\Delta \bar{\xi}_{i_d}}
        - \frac{J_{i_1i_2i_3}}{6} \frac{1}{\Delta t} \frac{\rho^n}{\rho^k}
      \right)  
    \right]
  \end{split}
\end{align}
\begin{align}
\nonumber
  \begin{split}
    \frac{\hat{\rho}^{k,n}}{6} 
    + \Delta\tau\frac{1}{\Delta \bar{\xi}_{i_d}}
    (\hat{\bar{m}}^-_{i_d} - \mathscr{D}^-_{i_d}\Delta^-_{i_d}\rho)
    &\geq \frac{\hat{\rho}^{k,n}}{6} 
    - \Delta\tau\frac{\mathscr{D}^-_{i_d}}{\Delta \bar{\xi}_{i_d}}
    \left(\frac{|\hat{\bar{m}}^-_{i_d}|}{\mathscr{D}^-_{i_d}} + \Delta^-_{i_d}\rho\right)\\
    &\geq \frac{\hat{\rho}^{k,n}}{6} 
    - \Delta\tau\frac{\mathscr{D}^-_{i_d}}{\Delta \bar{\xi}_{i_d}}
    \left(2\rho^-_{j,A} + \Delta^-_{i_d}\rho\right)\\
    &= \frac{\hat{\rho}^{k,n}}{6} 
    - \Delta\tau\frac{2\mathscr{D}^-_{i_d}}{\Delta \bar{\xi}_{i_d}}
    \rho^k\\
    = \rho^k &\left[
      \frac{J_{i_1i_2i_3}}{6} - \Delta\tau\left(
        \frac{2\mathscr{D}^-_{i_d}}{\Delta \bar{\xi}_{i_d}}
        - \frac{J_{i_1i_2i_3}}{6} \frac{1}{\Delta t} \frac{\rho^n}{\rho^k}
      \right)  
    \right].
  \end{split}
\end{align}
Summing over all element interfaces,  we have
\begin{equation}
\nonumber
  \hat{\rho}^{k+1} 
  \geq C_1^{\tau} \rho^k
  \left[
    J_{i_1i_2i_3} - \Delta\tau\left(
      2\sum_{l=1}^3\frac{\mathscr{D}^+_{i_d}+\mathscr{D}^-_{i_d}}{\Delta \bar{\xi}_{i_d}}
      - J_{i_1i_2i_3} \frac{1}{\Delta t} \frac{\rho^n}{\rho^k}
    \right)  
  \right]>0.
\end{equation}
If the term in the parentheses in the above equation is less or equal to zero, then no constraint should be imposed on $\Delta\tau$ to guarantee the positivity of $\hat{\rho}^{k+1}$.
If this term in the parentheses is strictly positive and $\Delta \tau$ satisfies 
\begin{equation}
\nonumber
  J_{i_1i_2i_3} - \Delta\tau\left(
    2\sum_{d=1}^3\frac{\mathscr{D}^+_{i_d}+\mathscr{D}^-_{i_d}}{\Delta \bar{\xi}_{i_d}}
    - J_{i_1i_2i_3} \frac{1}{\Delta t} \frac{\rho^n}{\rho^k}
  \right)
  >0,
\end{equation}
then $\hat{\rho}^{k+1} > 0$, which yields the desired result.
\hfill $\square$
\end{proof}


\subsection{Positivity of internal energy}
\label{sec:internal-energy-positivity}

Using the BDF1 dual time-stepping update formula \eqref{eq:bdf1-dual-time-stepping-update} for $\hat{\bm u}_1^{k+1}(\vec{\xi}_{i_1i_2i_3})= J_{i_1i_2i_3} {\bm u}_1^{k+1}$, the internal energy at the $(k+1)$-th pseudotime level can be determined by substituting ${\bm u}_1^{k+1}$ into $(\rho E)^{k+1} = (\rho e)^{k+1} \rho^{k+1} + \frac{(m^{k+1})^2}{2\rho^{k+1}}$ at each solution point, thus leading to the following inequality for $\Delta\tau$ provided that 
all the conditions of Theorem~\ref{COR:posDensMassDiff_1stOrder} are satisfied:
\begin{equation}
 \label{eq:internal-energy-positivity}
 \frac{(\rho e)^{k+1}\rho^{k+1}}{{C_1^{\tau}}^2} 
 = A\left(\frac{\Delta\tau}{J}\right)^2 
 + B\frac{\Delta\tau}{J} 
 + C>0,
\end{equation}
where $(\rho e)^{k+1}$ is the total internal energy of ${\bm u}_1^{k+1}$. The coefficients of the quadratic trinomial can be readily computed by using Eqs.~\eqref{eq:bdf1-dual-time-stepping-update} and are given by
\begin{equation}\label{eq:coef}
  \begin{array}{ll}
    A &= (R_1^{E} R_1^{\rho})^k -\frac{1}{2} 
    \left \| (\bm{R}^m_1)^{k} \right \|^2
    +\frac1{\Delta t}\left({\bm u}_1^{n}\right)^\top \left[ 
      \begin{array}{l}
      \phantom{-} {R}_1^{E}   \\
      -\bm{R}_1^{m}  \\
      \phantom{-} {R}_1^{\rho} \\
      \end{array}
    \right]^k\\
    &\quad+ \frac1{\Delta t^2} \left(
      \rho_1^{n} (\rho E)_1^{n}
      - \left\|{\bm m}_1^n\right\|^2
    \right) \\ \\
    B &= \left({\bm u}_1^{k}\right)^\top\left[ 
      \begin{array}{l}
      \phantom{-} {R}_1^{E}   \\
      -\bm{R}_1^{m}  \\
      \phantom{-} {R}_1^{\rho} \\
      \end{array}
    \right]^k
    + \frac1{\Delta t} \left({\bm u}_1^{k}\right)^\top\left[ 
      \begin{array}{l}
        \phantom{-} {\rho E}_1^{n}   \\
        -{\bm m}_1^{n}  \\
        \phantom{-} \rho_1^{n} \\
      \end{array}
    \right]^n \\ \\
    C &= \rho_1^k (\rho E)_1^k - \frac12 \left\|\bm{m}_1^{k}\right\|^2
    = (\rho e)_1^k\rho_1^k,
  \end{array}
\end{equation}
where ${R}_1^{\rho}$, $\bm{R}_1^{m}$, ${R}_1^{E}$ are the right-hand sides of Eq.~(\ref{eq:first-order}) associated with the continuity, momentum, and energy equations, respectively, and $\left\| \cdot \right\|$ is the Euclidean norm in $\mathbb{R}^3$.
Note that Eq.~(\ref{eq:internal-energy-positivity}) holds for any spatial discretization. 

In \cite{upperman2023first}, it is proven that the first-order FV scheme given by Eq.~\eqref{eq:first-order} with the forward Euler discretization in time preserves the positivity of the internal energy, if the solution at the previous time level is in the admissible set, i.e., ${\rho}^{k}>0$ and $({\rho e})^{k}>0$ for all solution points in the domain. Since this proof of the positivity of the internal energy is solely based on the observation that the vertical intercept of the quadratic trinomial is strictly positive if the solution at the previous time level is in the admissible set, a proof of this statement for the proposed dual time-stepping BDF1 scheme is identical to the one given in~\cite{upperman2023first} and not repeated here.

The high-order discretization used for approximating the physical viscous terms may significantly increase the stiffness of the constraint on $\Delta \tau$ required for the positivity of the internal energy in regions where the solution loses its regularity. Bounding the velocity and temperature gradients in troubled elements by using the conservative and entropy{-}stable limiters developed in~\cite{upperman2023first} eliminates this stiffness of the pseudotime step constraint required for the internal energy positivity.
\begin{remark}
It should be noted that the velocity and temperature limiters and the proof of their entropy stability are independent of the temporal discretization and can be directly used for the proposed implicit dual time-stepping schemes without any modifications.
\end{remark}

The proposed dual time-stepping first-order scheme is entropy-stable in the fully discrete sense if the BDF1 method is used for discretizing the derivative in the physical time. Indeed, all spatial terms of the first-order scheme given by Eq. ~(\ref{eq:first-order}) are entropy{-}dissipative \cite{upperman2023first}. Also, it is well known that the BDF1 scheme is unconditionally entropy{-}stable~\cite{tadmor2003entropy}. Therefore, if the physical time derivative is discretized by the BDF1 method, it can be readily shown that the entropy inequality holds in the fully discrete sense.  {This follows from the fact that} both the BDF1 discretization of the time derivative term and all discretized spatial terms in Eq. (\ref{eq:first-order}) are entropy{-}dissipative assuming that the residual of the pseudotime iterations is driven {below some} suitable norm of the entropy dissipation generated by the 1st-order BDF1 scheme.

\section{Dual time-stepping positivity-preserving entropy{-}stable BDF1 scheme with high-order spatial discretization}
\label{sec:dual-high}
{In this section,} the first-order dual time-stepping positivity-preserving entropy{-}stable scheme presented in the foregoing section {is extended} to high-order spatial discretizations. The high-order scheme is constructed by combining the first-order dual time-stepping positivity-preserving scheme given by Eqs.(\ref{eq:bdf1-dual-time-stepping-update},\ref{eq:first-order}) with its high-order positivity-violating counterpart such that the resultant scheme is positivity{-}preserving, entropy{-}stable, and high-order accurate in regions where the solution is sufficiently smooth. 

\subsection{High-order positivity-violating entropy{-}stable scheme}
\label{sec:high-order-entropy-stable}
In {this} section,  we briefly outline how the positivity-violating entropy{-}stable scheme is constructed.
Replacing the low-order spatial FV operators in Eq.(\ref{eq:first-order}) with the high-order spectral collocation operators defined on the same $p$th-order LGL solution points (see Section~\ref{sec:sbp}), the corresponding semi-discrete high-order scheme for the regularized Navier{--}Stokes equations (Eq.(\ref{rns3d})) is given by 
\begin{equation}\label{eq:semi-discrete-ssdc}
  \left(\hat{{\bf u}}_p\right)_t = 
  - \sum\limits_{ d=1}^{3} 
  \mathcal{P}^{-1}_{d} \Delta_{d}\hat{\bar{{\bf f}}}^{p(I)}_d 
  + \mathcal{D}_{d} \left[
    \hat{{\bf f}}^{p(V)}_d 
    + \hat{{\bf f}}^{p(AD)}_d 
  \right]
  + \sum\limits_{ d=1}^{3} \mathcal{P}^{-1}_{d} 
    \hat{{\bf g}}^p_d,
\end{equation} 
where $\hat{{\bf u}}_p = [J] {\bf u}_p$, $\hat{\bar{{\bf f}}}^{p(I)}_d$ for $d=1, 2, 3$ {is a} $p$th - order contravariant inviscid entropy{-}conservative {flux} defined at the flux points, and $\hat{{\bf g}}^p_d$ {includes the} boundary, interface and artificial dissipation penalty terms.  For the full definitions and extended discussion of these fluxes and penalty terms, we refer the reader  to~\cite{carpenter2016entropy}. 

The high-order contravariant viscous and artificial dissipation fluxes in Eq.(\ref{eq:semi-discrete-ssdc}) are constructed as follows:
\begin{equation}
\nonumber
  \hat{{\bf f}}^{p(vis)}_l = \sum\limits_{ d=1}^{3} [\hat{a}^l_d] {\bf f}^{p(vis)}_{d} 
  , \quad 
  {\bf f}^{p(vis)}_{d} = \sum\limits_{ i=1}^{3} [c^{(vis)}_{d,i}] {\bf\Theta}_{d},
\end{equation}
where $\hat{{\bf f}}^{p(vis)}_l = \hat{{\bf f}}^{p(V)}_l$ or $\hat{{\bf f}}^{p(vis)}_l = \hat{{\bf f}}^{p(AD)}_l$, $[c^{(vis)}_{d,i}], 1 \leq d, i \leq 3$ are the corresponding block-diagonal viscosity {matrices} with $5\times 5$ blocks, and $[\hat{ a}^l_{d}]$ is the $p$th-order approximation of $J\frac{\partial \xi_l}{\partial x_d}$ satisfying the geometric conservation law (GCL) equations \cite{thomas1979geometric}. 
The matrices $[c^{(vis)}_{i,l}],$ $1 \leq i, l \leq 3$ satisfy the following properties $[\left( c^{(vis)}_{i,l} \right)^T] = [c^{(vis)}_{l,i}]$ and $\sum\limits_{ i=1}^{3}\sum\limits_{ l=1}^{3} {\bf v}^T[c^{(vis)}_{i,l}] {\bf v} \geq 0, \forall {\bf v} \in \mathbb{R}^5$, i.e., the full viscous/artificial dissipation tensor is symmetric positive semi-definite. The gradient of the entropy variables, ${\bf\Theta}_{d}$, is discretized by using an approach developed in \cite{carpenter2014entropy}, which closely resembles the local discontinuous Galerkin (LDG) method developed in~\cite{cockburn1998local}.

The fully discrete dual-time-stepping variant of the above scheme with the implicit BDF1 discretization in the physical time is given by
\begin{equation}
 \label{eq:bdf1-update-high}
 \hat{\bf u}_p^{k+1}
 = C_1^{\tau}\left(\hat{\bf u}_p^{k,n} + \Delta\tau{\bf R}_p^k
 \right),
\end{equation}
 where $\hat{\bf u}_p^{k,n} = \hat{\bf u}_p^k + \frac{\Delta \tau}{\Delta t} \hat{\bf u}_p^n$, $C_1^{\tau}$ is given by Eq.(\ref{eq:Ctau}), and ${\bf R}_p^k$ is the right-hand side of Eq.~(\ref{eq:semi-discrete-ssdc}) evaluated at the previous pseudotime level $\hat{\bf u}_p^k$.
This fully discrete  {spectral} collocation scheme is conservative and stable in the entropy sense. The conservation follows immediately from the telescopic flux form of the inviscid terms and the SBP form of the viscous and artificial dissipation terms. The entropy stability of the spatial Navier{--}Stokes terms in Eq.~(\ref{eq:semi-discrete-ssdc}) is proven in~\cite{carpenter2014entropy}, and the entropy dissipation properties of the artificial dissipation terms are shown in~\cite{upperman2022positivity, upperman2023first}. The entropy stability of the BDF1 time integrator is proven in \cite{tadmor2003entropy}. 
It should be noted, however, that this dual time-stepping entropy{-}stable BDF1 scheme with the high-order SBP discretization in space does not in general preserve the positivity of thermodynamic variables, because the high-order dissipation operators do not satisfy the maximum principle.


\subsection{High-order positivity{-}preserving entropy{-}stable flux-limiting \newline scheme}
\label{sec:high-order} 

To construct a new {BDF1} dual time-stepping positivity-preserving entropy{-}stable scheme {with high-order spatial discretization}, the {spatial} first-order positivity-preserving (Eq.(\ref{eq:bdf1-update})) and high-order positivity-violating (Eq.(\ref{eq:bdf1-update-high})) {operators} are combined on each LGL element using the flux-limiting technique developed in~\cite{yamaleev2023high} as follows:
\begin{align}
  \begin{split}\label{eq:limiting-solution}
    \hat{\bf u}^{k+1}(\theta_f) 
    &= C_1^{\tau}\left(\hat{\bf u}^{k,n} 
    + \Delta\tau \left[
      \theta_f {\bf R}^k_p
      + (1-\theta_f) {\bf R}^k_1
    \right] \right) 
  \end{split}
\end{align}
where the flux limiter $\theta^k_f $ $(0 \le \theta^k_f \le 1)$ is a constant on each element \cite{yamaleev2023high} and $\hat{\bf u}^{k+1}_p = \hat{\bf u}^{k+1}|_{\theta_f=1}$ and $\hat{\bf u}^{k+1}_1 = \hat{\bf u}^{k+1}|_{\theta_f=0}$ are $p$th- and 1st-order numerical solutions, respectively, which are defined on the same Legendre-Gauss-Lobatto (LGL) elements with {the} high-order metric terms.

Since $0<C_1^{\tau}<1$, a proof that the high-order scheme given by Eq.~(\ref{eq:limiting-solution}) guarantees pointwise positivity of density and temperature is nearly identical to that presented in~\cite{upperman2021high,yamaleev2023high} for the same flux-limiting entropy{-}stable scheme with the explicit Euler discretization in the physical time. The entropy stability of the flux-limiting scheme (Eq.~(\ref{eq:limiting-solution})) follows immediately from the fact that this hybrid scheme is a linear convex combination of two entropy{-}stable schemes on each high-order element. Further details of the positivity and entropy stability of the high-order flux-limiting scheme can be found in~\cite{upperman2021high,yamaleev2023high}.


\section{Dual time-stepping BDF2 positivity-preserving scheme}
\label{sec:bdf2-positivity}

Let us now show how the dual time-stepping BDF1 positivity-preserving entropy{-}stable scheme with the high-order spatial discretization can be extended to the implicit BDF2 time integrator. 
Similar to the dual time-stepping BDF1 scheme presented in Section~\ref{sec:dual_time-stepping}, its BDF2 counterpart can be derived by adding the first-order explicit discretization of the derivative in the pseudotime to the conventional implicit BDF2 scheme in the physical time, thus leading to 
\begin{equation}
\nonumber
 \frac{\hat{\bf u}^{k+1} - \hat{\bf u}^k}{\Delta \tau}
 = - \frac{3\hat{\bf u}^{k+1} - 4\hat{\bf u}^n + \hat{\bf u}^{n-1}}{2 \Delta t} + {\bf R}^k,
\end{equation}
$$
{\bf R}^k = \theta_f {\bf R}^k_p + (1-\theta_f) {\bf R}^k_1,
$$
where $\hat{\bf u}^{k}$ and $\hat{\bf u}^{n}$ are solution vectors at the pseudotime level $k$ and physical time level $n$, 
${\bf R}_1^k$ and ${\bf R}_p^k$ are the right-hand sides of Eqs.~(\ref{eq:first-order}) and (\ref{eq:semi-discrete-ssdc}) evaluated at the previous pseudotime level $\hat{\bf u}^k$, respectively, and $\theta_f$ is the flux limiter presented in Section~\ref{sec:high-order}.

Thus, the update formula for the solution vector $\hat{\bf u}^{k+1}$ at the pseudotime level $k+1$ is given by
\begin{equation}
\label{dts_BDF2}
  \hat{\bf u}^{k+1}
  =  
  \frac{2\Delta t}{2\Delta t+3\Delta \tau}      \hat{\bf u}^k
  + \frac{4\Delta \tau}{2\Delta t+3\Delta \tau}     \hat{\bf u}^n
  - \frac{\Delta \tau}{2\Delta t+3\Delta \tau}     \hat{\bf u}^{n-1}
  + \frac{2\Delta t\Delta \tau}{2\Delta t+3\Delta \tau} {\bf R}^k.
\end{equation}
Introducing the following quantities
\begin{equation}
 \label{eq:bdf2-combination}
  \hat{\bf u}^{k,n}
  = 
   \hat{\bf u}^k
  + \frac{2\Delta \tau}{\Delta t}     \hat{\bf u}^n
  - \frac{\Delta \tau}{2\Delta t}     \hat{\bf u}^{n-1}
\end{equation}
and
\begin{equation}
\nonumber
  C_2^\tau = \frac{2\Delta t}{2\Delta t + 3\Delta \tau},
\end{equation}
the dual time-stepping {BDF2} update formula given by Eq.(\ref{dts_BDF2}) can be recast in the following form:
\begin{equation}
 \label{eq:bdf2-update}
 \hat{\bf u}^{k+1}
 = C_2^{\tau}\left(\hat{\bf u}^{k,n} + \Delta\tau{\bf R}^k
 \right).
\end{equation}

The above dual time-stepping BDF2 scheme is nearly identical to its BDF1 counterpart given by Eq. (\ref{eq:bdf1-update}). As a result, all proofs of the positivity of density and internal energy presented in Sections ~\ref{sec:positivity} can be straightforwardly extended to this dual time-stepping BDF2 scheme.


\subsection{Density positivity}
\label{sec:bdf2-density}

The positivity of density for the dual time-stepping BDF2 scheme given by Eq.(\ref{eq:bdf2-update}) can be proven following the same arguments as in Theorem~\ref{COR:posDensMassDiff_1stOrder}, thus leading to the following constraint on $\Delta\tau$:
\begin{equation}
\nonumber
  \Delta\tau < 
  \min\limits_{i_1, i_2, i_3}\frac{1}
  {\frac 2{J_{i_1i_2i_3}} \sum\limits_{d=1}^{3}
  \frac{\mathscr{D}^{+}_d + \mathscr{D}^{-}_d}{\Delta \bar{\xi}_d}
  - \left(
    \frac{2}{\Delta t} \frac{\rho^{n}}{\rho^k}
    - \frac{1}{2\Delta t} \frac{\rho^{n-1}}{\rho^k}
  \right)}
  = \Delta{\tau^{\rho}_2} ,
\end{equation}
provided that $\mathscr{D}^{\pm}_{i_d} \geq \mathscr{D}^{\pm}_{{i_d},\min}= \frac{|\hat{\bar{m}}^{\pm}_{i_d}|}{2{\rho}_{i_d,A}^{\pm}}$.

\subsection{Internal energy positivity}
\label{sec:bdf2-internal-energy}

We now look into {the} positivity of the internal energy of the dual time{-}stepping BDF2 scheme.  Substituting 
$\hat{\bm u}^{k+1}(\vec{\xi}_{i_1i_2i_3})= J_{i_1i_2i_3} {\bm u}^{k+1}$ given by Eq.~(\ref{eq:bdf2-update}) into the $(\rho E)^{k+1} = (\rho e)^{k+1} \rho^{k+1} + \frac{(m^{k+1})^2}{2\rho^{k+1}}$ at each solution point yields the following quadratic equation for $\Delta \tau$:
\begin{equation}
 \label{eq:internal-energy-BDF2}
 \frac{(\rho e)^{k+1}\rho^{k+1}}{{C_2^{\tau}}^2} 
 = A\left(\frac{\Delta\tau}{J}\right)^2 
 + B\frac{\Delta\tau}{J} 
 + C>0,
\end{equation}
where
$$
  \begin{array}{ll}
    A &= ({R}^{E} {R}^{\rho})^k - \frac{1}{2} \left \| ({\bm R}^{m})^k \right \|^2
    +\left(\frac{2}{\Delta t}{{\bm u}}^{n}-\frac{1}{2\Delta t}{{\bm u}}^{n-1}\right)^\top \left[ 
      \begin{array}{l}
      \phantom{-} {R}^{E}  \\
      -{\bm R}^{m}  \\
      \phantom{-} {R}^{\rho} \\
      \end{array}
    \right]^k \\
    &+ \left(\frac{2}{\Delta t}\rho^{n}-\frac{1}{2\Delta t}\rho^{n-1}\right)
    \left(\frac{2}{\Delta t}(\rho E)^{n}-\frac{1}{2\Delta t}(\rho E)^{n-1}\right)
    - \left\|\frac{2}{\Delta t}{\bm m}^{n}-\frac{1}{2\Delta t}{\bm m}^{n-1}\right\|^2, \\ \\
    B &= \left({{\bm u}}^{k}\right)^\top\left[ 
      \begin{array}{l}
      \phantom{-} {R}^{E}  \\
      -{\bm R}^{m}  \\
      \phantom{-} {R}^{\rho} \\
      \end{array}
    \right]^k
    + \left(\frac{2}{\Delta t}{{\bm u}}^{n}-\frac{1}{2\Delta t}{{\bm u}}^{n-1}\right)^\top\left[ 
      \begin{array}{l}
      \phantom{-} \rho E   \\
      -{{\bm m}}  \\
      \phantom{-} \rho \\
      \end{array}
    \right]^k, \\ \\
    C &= (\rho e)^k\rho^k.
  \end{array}
$$
In the above equation,  we use similar notations as in Eq.~(\ref{eq:coef}).
Thus, if the solution ${\bm u}^k$ at the previous pseudotime step is in the admissible set, then the quadratic trinomial has the positive vertical intercept $C$, which implies that there always exists $\Delta \tau:$ $\Delta{\tau^{\rho}_2} \ge \Delta \tau > 0$ such that the inequality Eq.~(\ref{eq:internal-energy-BDF2}) holds and the proposed dual time-stepping BDF2 scheme {preserves} the positivity of both internal energy and density.

\begin{remark}
The proposed methodology can be directly applied to construct dual time-stepping multistep positivity-preserving schemes beyond the BDF1 and BDF2 time integrators. Note, however, that the third- and higher-order BDF schemes are not $A$-stable and therefore not considered herein.
\end{remark}

\section{Numerical Results}
\label{sec:results}

We now demonstrate the design order of accuracy, positivity-preserving properties, and the convergence performance of the proposed dual time-stepping schemes for both steady-state and unsteady benchmark viscous flows. 
{ For all numerical experiments presented herein, the positivity of density and internal energy are enforced for all collocation points at each peudotime step as described in Setions \ref{sec:positivity}--\ref{sec:bdf2-positivity}.
The} flow quantities are non-dimensionalized as follows: $t=\frac{t'}{L^*/v^*}$, $x_d=\frac{x_d'}{L^*}$, $\rho=\frac{\rho'}{\rho^*}$, $v_i=\frac{v_i'}{v^*}$, $\mu=\frac{\mu'}{\mu^*}$, $p=\frac{p'}{p^*}$, $R=\frac{R'}{R^*}$, $E=\frac{E'}{\gamma c_{\rm v}^* T^*}$, $\kappa=\frac{\kappa'}{\kappa^*}$ and $c_{\rm v}=\frac{c_{\rm v}'}{c_{\rm v}^*}$. The dimensional quantities are indicated with a prime, and the reference quantities are indicated with an asterisk. Thus, the non-dimensional pressure, total and kinetic energy variables are given by
\begin{align}
\nonumber
  &p=\rho T, &E=\frac{1}{\gamma} T + E_k, &&E_k=\frac{(\gamma-1)M_{\infty}^2}{2}u^2.
\end{align}

The $L_1$, $L_2$, and $L_{\infty}$ error norms of the solution vector are computed as follows:
\begin{align}
\nonumber
  \begin{split}
    \|{\bf u} - {\bf u}^\text{ex}\|_{L_1}
    &= \frac{\sum_{j=1}^{N_{\rm elem}} |{\bm u}_j - {\bm u}_j^\text{ex}|^\top \mathcal{P} J_j 1_5}
    {5 \sum_{j=1}^{N_{\rm elem}} 1_5^\top \mathcal{P} J_j 1_5},
  \end{split}\\
\nonumber
  \begin{split}
    \|{\bf u} - {\bf u}^\text{ex}\|_{L_2}
    &= \sqrt{\frac{\sum_{j=1}^{N_{\rm elem}} ({\bm u}_j - {\bm u}_j^\text{ex})^\top \mathcal{P} J_j ({\bm u}_j - {\bm u}_j^\text{ex})}
    {5 \sum_{j=1}^{N_{\rm elem}} 1_5^\top \mathcal{P} J_j 1_5}},
  \end{split}\\
\nonumber
  \begin{split}
    \|{\bf u} - {\bf u}^\text{ex}\|_{L_\infty}
    &= \max_{1\leq j\leq N_{\rm elem}} 
    \max_{1\leq i \leq N_\text{nodes}} \max\limits_{1\leq l \leq 5}
    |{u}^l_j(\vec{\xi}_i) - \left({u}^{\text{ex}}\right)_j^l(\vec{\xi}_i)|.
  \end{split}
\end{align}

The relative and absolute errors are defined as follows
\begin{align}
\nonumber
  &\epsilon_{abs}
  = \frac{\|{\bf u}^{k+1}-{\bf u}^k\|_{L^2}}{\Delta\tau},
  &&\epsilon_{rel}
  = \frac{\|{\bf u}^{m+1}-{\bf u}^k\|_{L^2}}
  {\|{\bf u}^{1}-{\bf u}^0\|_{L^2}}
  \frac{\Delta\tau_0}{\Delta\tau}.
\end{align}


\subsection{3D viscous shock problem}
\label{sec:convergence}

\begin{table}
  \centering
  \caption{Convergence rates of the BDF2 scheme with $p=4$ on globally refined nonuniform grids. }
  \label{tab:vs-temporal-Re10-order-3D-pert}
  \begin{tabular}{cc|cccc}
    $\Delta t$ & $N_{elem}$ & $L^1$ & rate & $L^2$ & rate \\
    \hline
    $3.2\times 10^{-1}$ & $3^3$ & $3.67\times 10^{-4}$ & --   & $6.32\times 10^{-4}$ & --   \\
    $1.6\times 10^{-1}$ & $6^3$ & $1.07\times 10^{-4}$ & $1.77$ & $2.82\times 10^{-4}$ & $1.16$ \\
    $8.0\times 10^{-2}$ & $12^3$ & $1.47\times 10^{-5}$ & $2.87$ & $3.36\times 10^{-5}$ & $3.07$ \\
    $4.0\times 10^{-2}$ & $24^3$ & $3.44\times 10^{-6}$ & $2.10$ & $7.39\times 10^{-6}$ & $2.19$ \\
    \hline
  \end{tabular}
\end{table}

First,  we verify the design order of accuracy of the proposed dual time-stepping BDF2 spectral collocation scheme by solving the 3D viscous shock flow on a sequence of randomly perturbed nonuniform hexahedral grids. The freestream flow parameters for this test case are set as follows: $M_{\infty}=2.5$, $Re_{\infty}=10$, and the flow direction at $[1\ \ 1\ \ 1]^\top$. Table~\ref{tab:vs-temporal-Re10-order-3D-pert} shows the results obtained with the BDF2 spectral collocation scheme with the basis polynomial functions of degree $p=4$ computed on nonuniform grids with $N_\text{elem}=3^3, 6^3, 12^3, 24^3$ elements. For this sequence of grids in the space-time domain, the spatial error is dominated by the second-order temporal error. As one can see from Table~\ref{tab:vs-temporal-Re10-order-3D-pert}, the dual time-stepping BDF2 positivity-preserving entropy{-}stable scheme demonstrates the second-order convergence rate on the finest grid.


\subsection{2D Shock/boundary layer interaction}

The next test problem is the oblique shock boundary layer interaction (SBLI). For this steady-state SBLI problem, the derivative in the physical time is discretized by the implicit BDF1 scheme, while the explicit Euler scheme is used to advance the solution in the pseudotime. We consider two different Mach numbers, $M_{\infty}=2.15$ and $6.85$, and impinging angles $\alpha = 30.8^o$ and $11.8^o$, respectively. The Reynolds and Prandtl numbers are set {to} $Re_{\infty}=10^5$ and $Pr=0.72$ for both cases. The entropy{-}stable no-slip boundary conditions developed in~\cite{dalcin2019conservative} are imposed on a flat plate for $0\le x \le 2$, while the Euler wall boundary conditions are used in the upstream of the plate leading edge $-0.2 \le x <0$. The supersonic inflow boundary conditions are imposed at the left and top boundaries. On the right boundary, the subsonic outflow boundary conditions are used in the boundary layer region, whereas the supersonic outflow is imposed on the rest of the boundary.

For the SBLI test problem, the physical time step used in the dual time-stepping BDF1 scheme is initially fixed and ramped down after a user-specified physical time has passed. As for the pseudotime, the step size is selected to satisfy both the positivity and CFL conditions. Here,  we use the following notation to denote the upper pseudotime step bounds required for {the} positivity of density and temperature: $\Delta \tau^{\rho}_{FE}${, } $\Delta {\tau^{\rho}_1}$, and $\Delta {\tau^{\rm IE}_1}$, {where} $\Delta \tau^{\rho}_{FE}$ is the time step size required for density positivity of the explicit forward Euler scheme presented in \cite{upperman2023first}.

For all SBLI test cases considered, an initial guess is obtained by using the forward Euler positivity-preserving entropy{-}stable $p=2$ spectral collocation scheme. Then, the solution{s are} advanced in the physical time using the explicit forward Euler and implicit BDF1 {dual time-stepping} positivity-preserving entropy{-}stable $p=4$ schemes.


\subsubsection{${M_{\infty}=2.15}$ case}


\begin{table}
  \centering
  \caption{Parameters of the BDF1 scheme for the SBLI $M_{\infty}=2.15$ case. The $\Delta t$ columns show the rampdown start and end values, delayed by $4.5$ time units. }
  \begin{tabular}{l|ccccccc}
  \label{tab:osbli-Ma2-cases}
    Alias & solver & $\Delta t_\text{init}$ & $\Delta t_\text{fin}$ & $\Delta\tau$ & $\|{\bf u}^{m+1}-{\bf u}^m\|_{L^2}$\\
    \hline
    BDF1-162  & pseudo BDF1 & $10^{-1}$ & $5\times 10^{-6}$ & $2\Delta \tau^{\rho}_{FE}$ & $10^{-8}$\\
    BDF1-152  & pseudo BDF1 & $10^{-1}$ & $5\times 10^{-5}$ & $2\Delta \tau^{\rho}_{FE}$ & $10^{-8}$\\
    BDF1-153 & pseudo BDF1 & $10^{-1}$ & $5\times 10^{-5}$ & $3\Delta \tau^{\rho}_{FE}$ & $10^{-8}$\\
  \end{tabular}
\end{table}


\begin{figure}
  \centering
  \begin{subfigure}{0.49\textwidth}
    \centering
    \includegraphics[height=0.245\textheight]{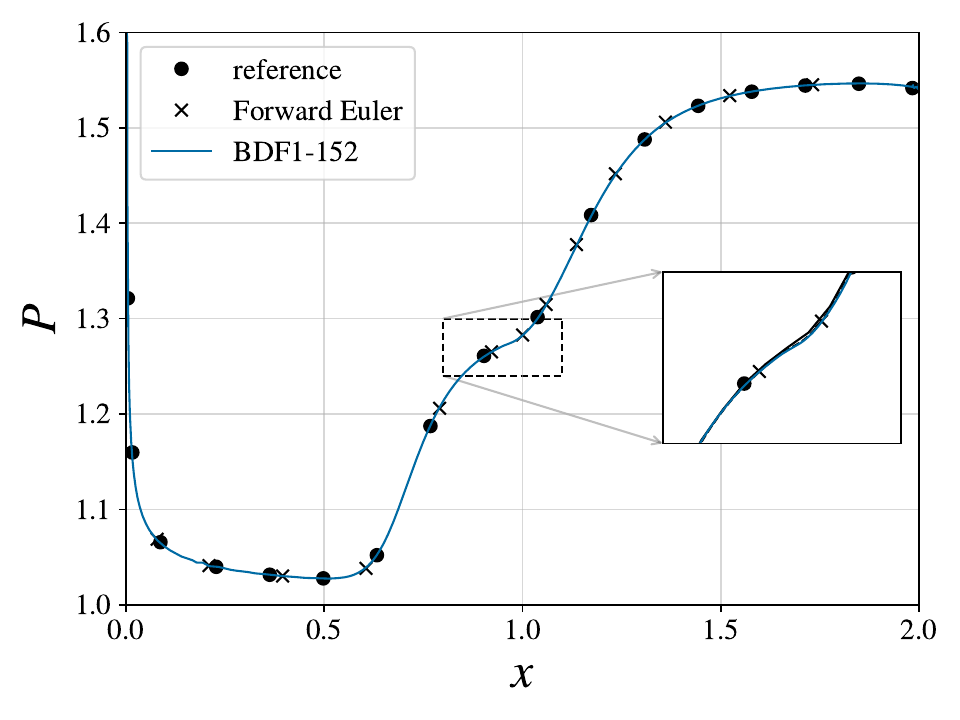}
  \end{subfigure}
  \begin{subfigure}{0.49\textwidth}
    \centering
    \includegraphics[height=0.245\textheight]{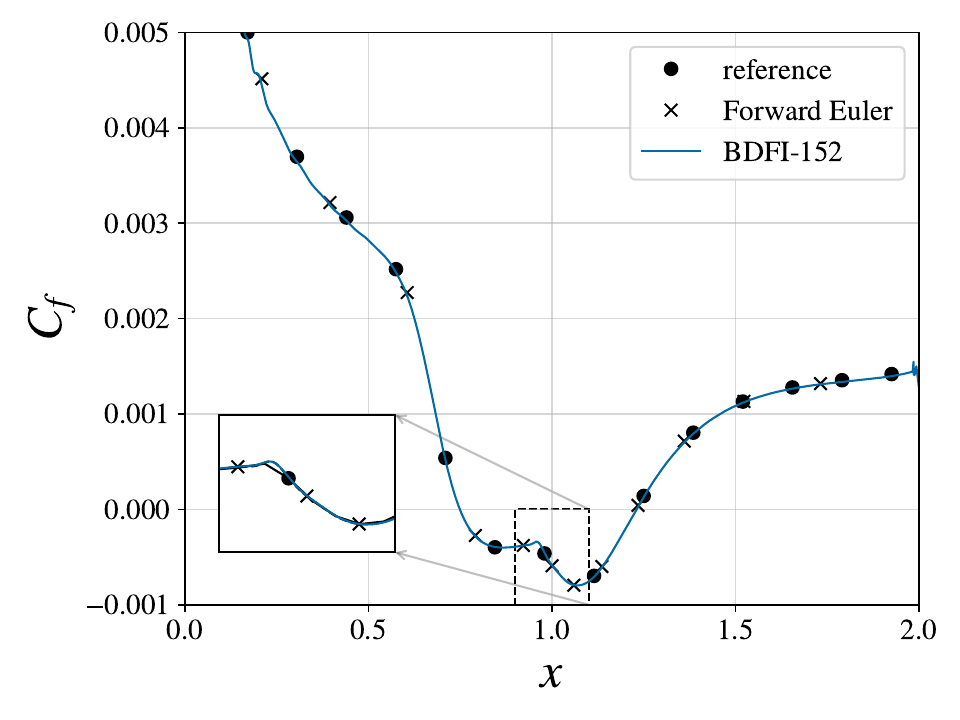}
  \end{subfigure}
  \caption{The wall pressure (left panel) and skin friction coefficient computed with the BDF1 dual time-stepping and forward Euler $p=4$ schemes on the $N_\text{elem}=12,012$ grid and the DG $p=6$ method on the $N_\text{elem}=11,041$ grid \cite{blanchard2016sbli} for the $M_{\infty}=2.15$ SBLI case.}
  \label{fig:osbli-12k--Ma215-p4-wall}
\end{figure}

For this test case, the height of the domain is set equal to $1$, and the grid consists of $N_{elem}=12,012$ LGL $p=4$ elements. For the dual time-stepping BDF1 scheme,  we use three sets of parameters, which are shown in Table \ref{tab:osbli-Ma2-cases},  where the main difference lies in the upper bound on $\Delta \tau$ used in the computations. The corresponding case names are encoded {such that} the first number {denotes} the exponent of the {initial} time step size, the second number {indicates} the ramp down limit exponent of the time step size, and the last number {is} discussed next. Since the positivity-preserving bound given by Eq.~(\ref{eq:density-positivity}) allows for a larger pseudotime step size than that of the explicit forward Euler scheme \cite{upperman2023first}, the pseudotime step is selected such that it exceeds $\Delta \tau^{\rho}_{FE}$ by a factor {of} $2$ or $3$, which represents the third number in the case name. Note, however, that the positivity of thermodynamic variables is checked at each pseudotime step and the density and temperature positivity bounds are always enforced on $\Delta\tau$, if at any solution point either $\rho^{k+1} < \epsilon^{\rho}$ or $(\rho e)^{k+1}<\epsilon^{{\rm IE}}$, where $\epsilon^{\rho}$ and $\epsilon^{{\rm IE}}$ are user-defined small positive constants.

The wall pressure and {the} skin friction coefficient computed with the dual time-stepping BDF1 and explicit forward Euler (FE) schemes with the $p=4$ spectral collocation discretization in space and the reference solution obtained using the $p = 6$ discontinuous Galerkin method \cite{blanchard2016sbli} for the $M_{\infty} = 2.15$ SBLI problem are presented in Fig.~\ref{fig:osbli-12k--Ma215-p4-wall}. Since for this steady state problem, the results obtained with the proposed dual time-stepping BDF1 scheme are nearly identical to each other for all values of the physical and pseudotime step sizes used, only the BDF1-152 results are presented herein.
As one can see from this comparison, the BDF1 and FE density and pressure profiles are nearly
indistinguishable from each other and demonstrate excellent agreement with the reference
solution, thus indicating that the proposed dual time-stepping positivity-preserving entropy{-}stable scheme converges to the same solution obtained with the FE scheme and does not over-dissipate the SBLI solution.


\begin{figure}
    \centering
    \includegraphics[width=0.75\textwidth]{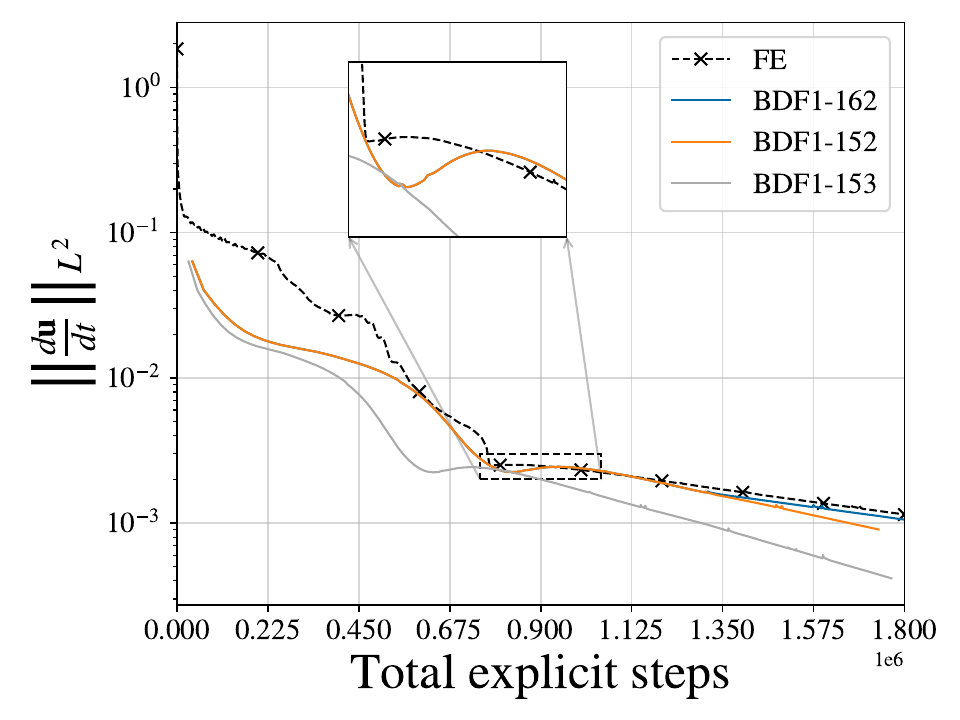}
    \caption{Convergence histories obtained with the forward Euler and dual time-stepping BDF1 schemes for the SBLI $M_{\infty}=2.15$ case.}
    \label{fig:Ma2-total-steps}
\end{figure}

Figure \ref{fig:Ma2-total-steps} shows convergence histories of the forward Euler and dual-time-stepping BDF1 schemes for the SBLI problem at $M_{\infty}=2.15$. As one can see in the figure, lowering the time step of the BDF1 scheme close to the level of $\Delta\tau^{\rho}_{\rm FE}$ (case BDF1-162) eventually gives the same convergence slope obtained with the explicit forward Euler scheme, thus indicating that no further benefit is observed even though its density positivity bound (value of $\Delta {\tau^{\rho}_{\rm 1}}$) is orders of magnitude higher than $\Delta\tau^{\rho}_{\rm FE}$. Instead, the convergence rate increases if the BDF1 time step $\Delta t$ is set about one order of magnitude higher than that of the baseline forward Euler scheme (case BDF1-152). This effect is due to the second term in the denominator of the bound given by Eq.~\eqref{eq:density-positivity}. Further acceleration in {convergence can be achieved by allowing} the pseudotime step size to {exceed} $\Delta{\tau^{\rho}_{\rm 1}}$ { without violating the positivity of density}. As follows from Fig.~\ref{fig:Ma2-total-steps}, the convergence can be accelerated by nearly a factor of 2 when $\Delta\tau = 3 \Delta\tau^{\rho}_{\rm FE}$. It should be noted, however, that increasing the physical time step size much higher than $10 \Delta\tau^{\rho}_{\rm FE}$ significantly slows down the convergence. The main reason for this behavior is the fact that the solution at the previous physical time level becomes a poor initial guess for the forward Euler integrator if $\Delta t$ is very large, thus drastically increasing the number of pseudotime iterations required for convergence at each physical time step. Note that for this test problem, the temperature positivity constraint is several orders of magnitude higher than $\Delta{\tau^{\rho}_{\rm 1}}$ and imposes no additional constraint on $\Delta\tau$.


\subsubsection{${M_{\infty}=6.85}$ case}

\begin{figure}
  \centering
  \includegraphics[width=0.95\textwidth]{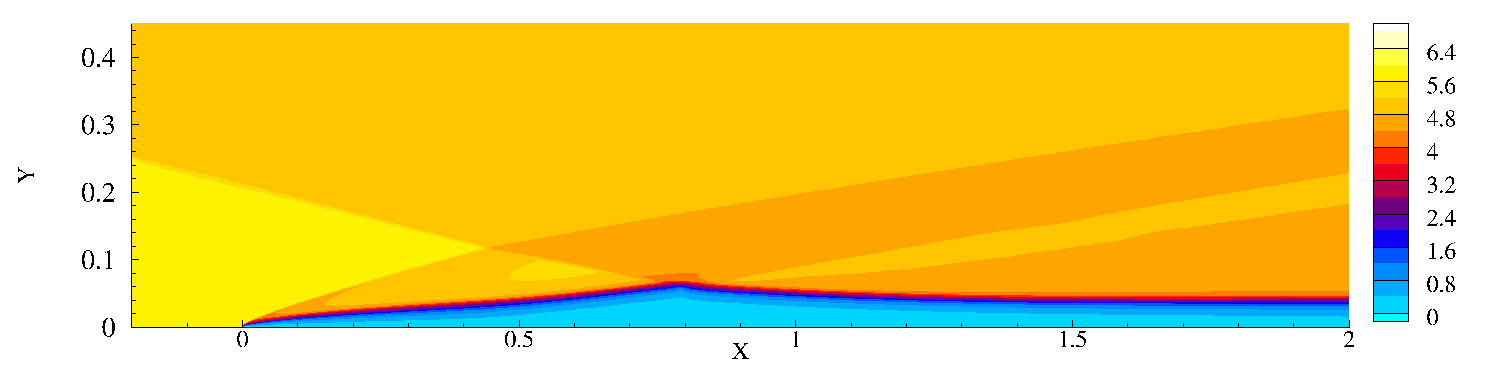}
  \caption{Mach number contours computed with the dual time-stepping BDF1 $p=4$ scheme for the $M_{\infty}=6.85$ SBLI case on the $N_\text{elem}=27,990$ grid.}\label{fig:Ma6-solution}
\end{figure}


\begin{figure}
  \centering
  \begin{subfigure}{0.49\textwidth}
    \centering
    \includegraphics[height=0.245\textheight]{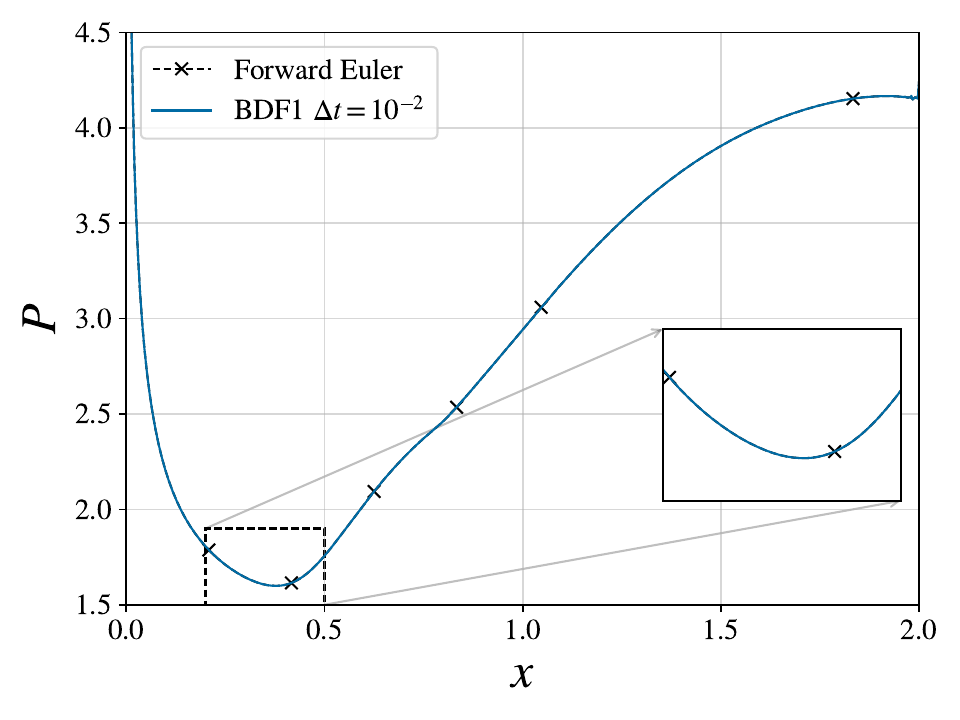}
  \end{subfigure}
  \begin{subfigure}{0.49\textwidth}
    \centering
    \includegraphics[height=0.245\textheight]{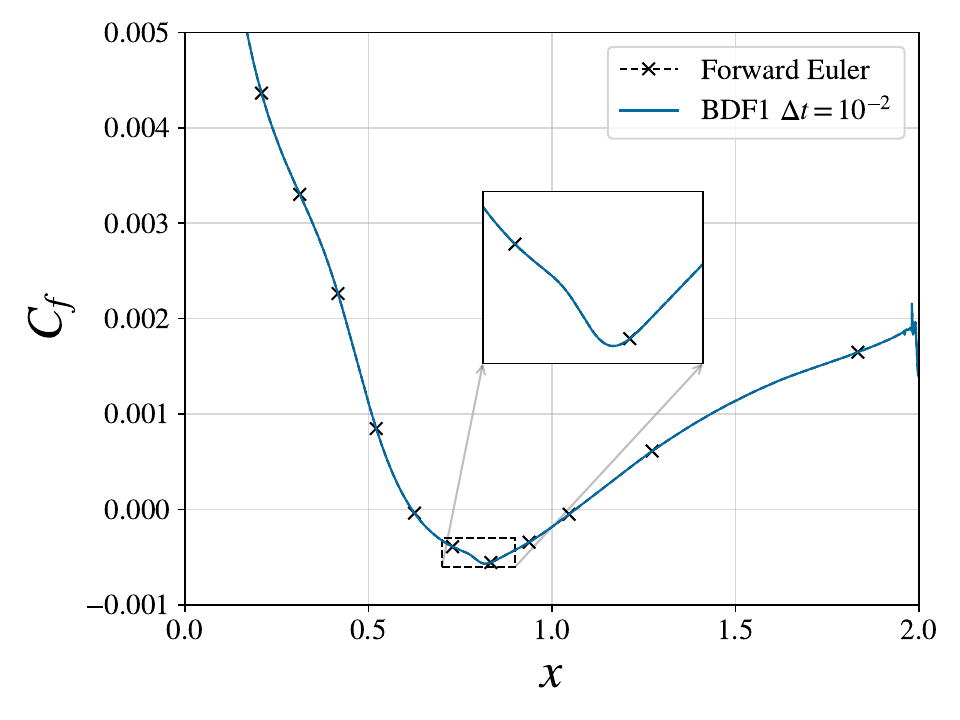}
  \end{subfigure}
  \caption{The wall pressure (left panel) and {the} skin friction coefficient computed with the dual time-stepping BDF1 and forward Euler $p=4$ schemes on the $N_\text{elem}=27,990$ grid for the $M_{\infty}=6.85$ SBLI case.}
  \label{fig:osbli-27k-p6-wall}
\end{figure}


We now solve the SBLI problem at a significantly higher Mach number ($M_{\infty}=6.85$) to see how it affects the accuracy of the proposed dual time-stepping scheme. Figure \ref{fig:Ma6-solution}, shows Mach number contours computed by the BDF1 dual time-stepping $p=4$ scheme for the $M_{\infty}=6.85$ case on the $N_\text{elem}=27,990$ unstructured quadrilateral grid, which are nearly identical to those presented in Fig. 28 in ~\cite{upperman2021high}. 

The wall pressure and {the} skin friction coefficient computed with the BDF1 and forward Euler $p=4$ schemes are shown in Figure~\ref{fig:osbli-27k-p6-wall}, where the point $(0,0)$ corresponds to the flat plate leading edge and $(x,y)=(2,0)$ is the right boundary of the computational domain. This comparison shows that both $P_{w}$ and $C_f$ computed using the dual time-stepping BDF1 scheme are nearly identical to those obtained with the forward Euler method. 

Unlike the forward Euler scheme, the dual time-stepping BDF1 scheme converges nearly monotonically for this test case. Note, however, that both schemes demonstrate similar convergence rates, which are due to the fact that the pseudotime step for this test case is solely bounded by the temperature positivity constraint that is sharp and cannot be relaxed.


\subsection{2D cylinder flow at ${M_{\infty}=17.605}$}


\begin{figure}
  \centering
  \begin{subfigure}{0.49\textwidth}
    \centering
    \includegraphics[height=0.25\textheight]{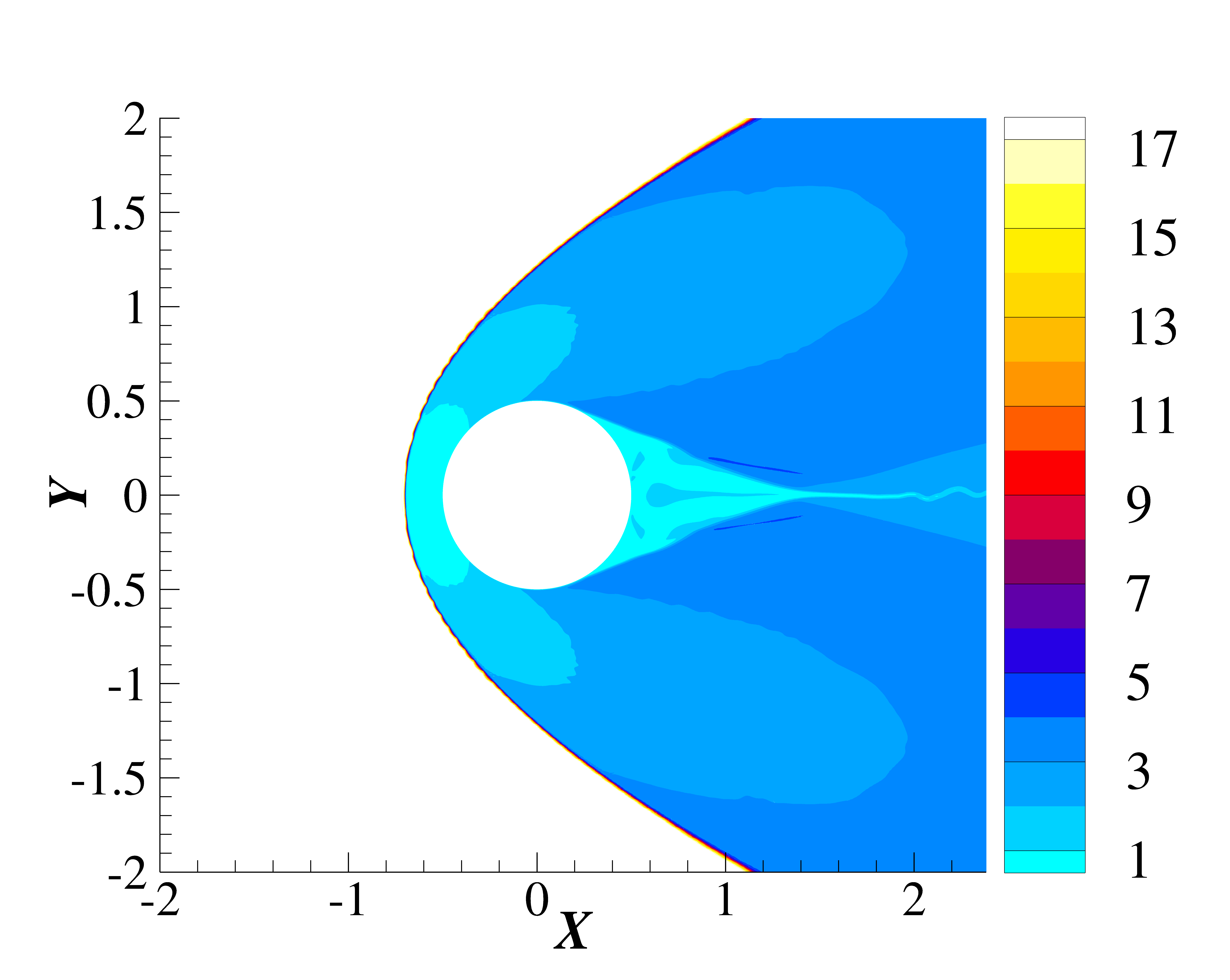}
  \end{subfigure}
  \begin{subfigure}{0.49\textwidth}
    \centering
    \includegraphics[height=0.25\textheight]{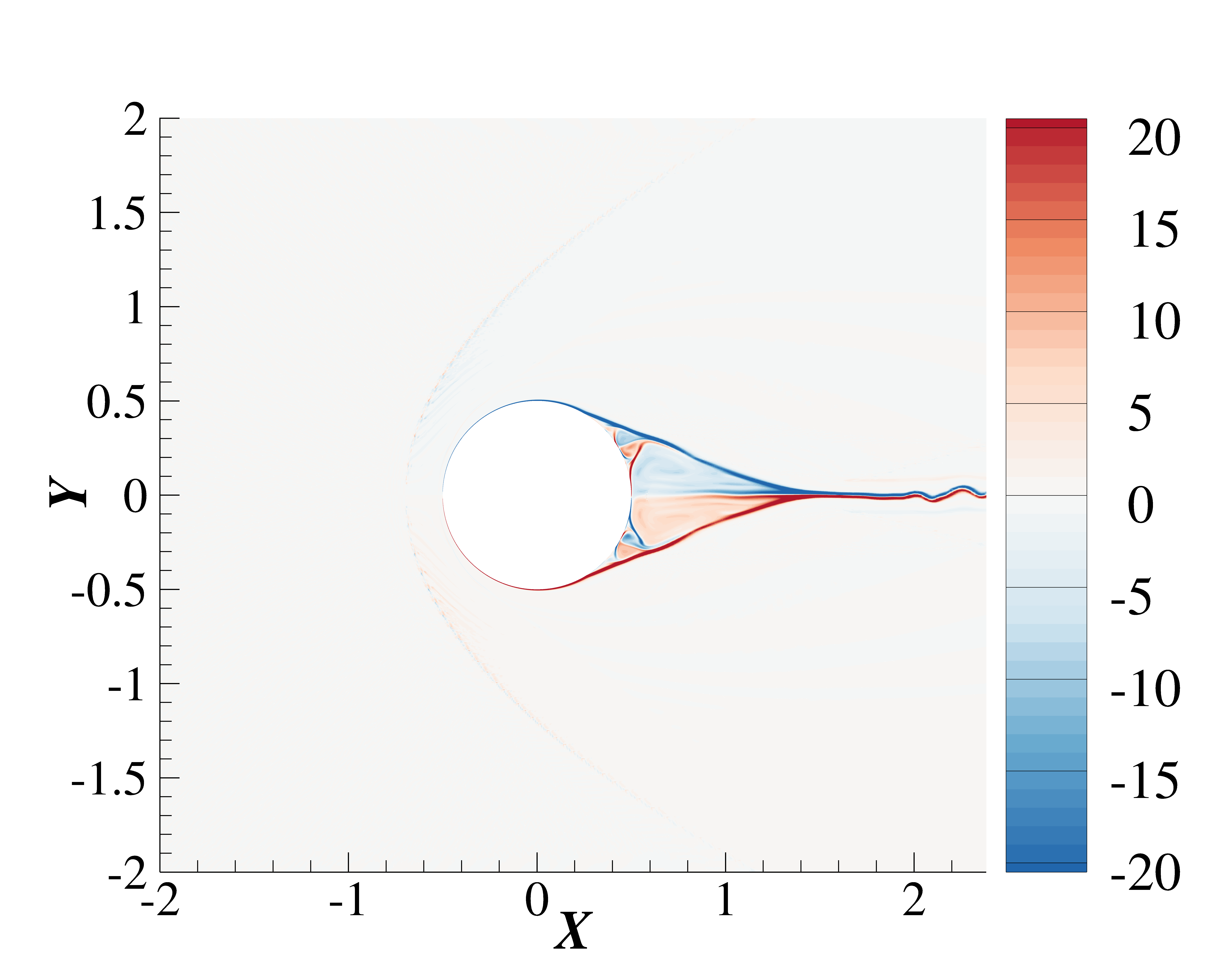}
  \end{subfigure}
  \caption{Mach number (left panel) and vorticity contours computed with the BDF2 dual time-stepping $p=5$ scheme for the hypersonic cylinder flow at $M_{\infty}=17.605$ on the $55,216$ element grid.}
  \label{fig:hypcyl-55k-p5-solution}
\end{figure}


\begin{figure}
  \centering
  \begin{subfigure}{0.49\textwidth}
    \centering
    \includegraphics[height=0.245\textheight]{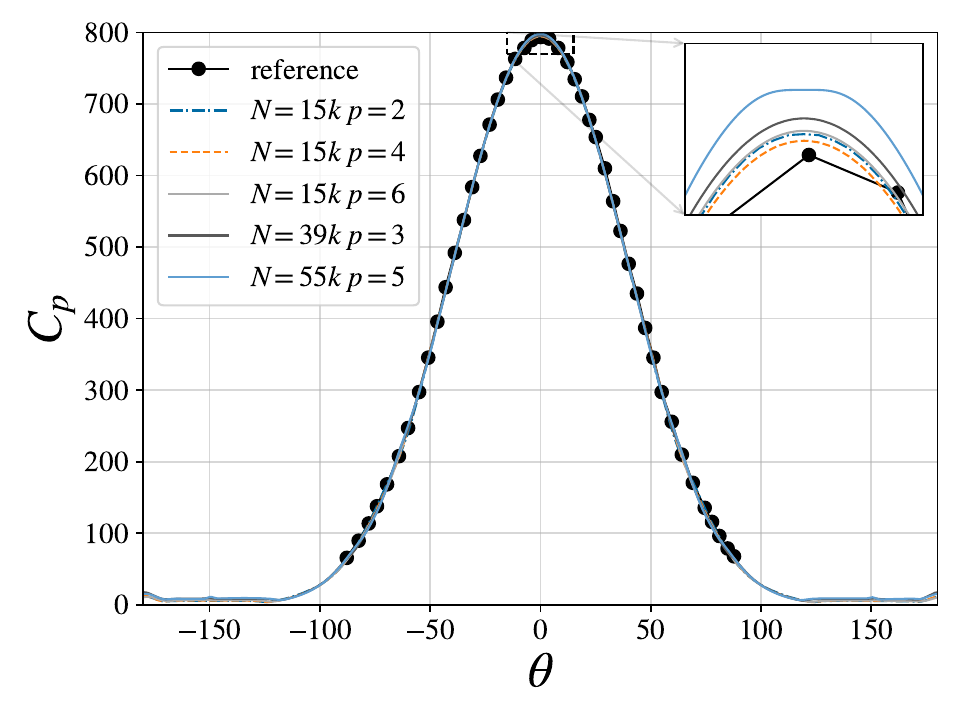}
  \end{subfigure}
  \begin{subfigure}{0.49\textwidth}
    \centering
    \includegraphics[height=0.245\textheight]{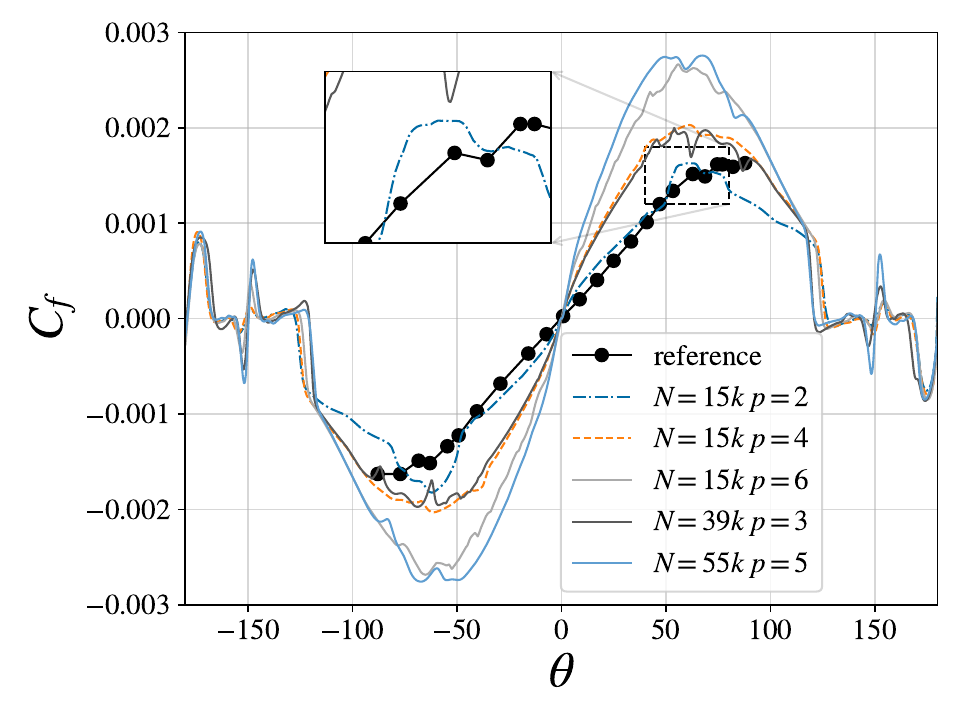}
  \end{subfigure}
  \caption{Time-averaged wall pressure (left panel) and skin friction coefficients computed using the dual time-stepping BDF1 scheme and the reference solution~\cite{fernandez2018physics} for the hypersonic cylinder flow at $M_{\infty}=17.605$ on the $15,216$, $p=6$ and $55,216$, $p=5$ element grids.}
  \label{fig:hypcyl-55k-p5-wall}
\end{figure}


Another test problem, for which the positivity of thermodynamic variables plays a critical role, is the 2D hypersonic flow around a cylinder at $M_{\infty}=17.605$ and $Re_{\infty}=376,930$. The cylinder center is located at $(x,y)=(0,0)$, and its radius is set equal to $r=0.5$. The computational domain is a rectangle: $-2\leq x\leq 3$ and $-2\leq y\leq 2$. The supersonic inflow and outflow boundary conditions are imposed on the left and right boundaries, respectively, while the supersonic freestream boundary conditions are used at the top and bottom boundaries. The entropy{-}stable no-slip boundary conditions similar to those used in the previous test problem are imposed on the cylinder wall. For this test problem, three grids with coarse ($N_{\rm elem}= 15,776$), medium ($N_{\rm elem}= 39,440$), and fine ($N_{\rm elem}=55,216$) resolutions are considered. These grids are stretched in the direction normal to the cylinder wall so that their wall grid spacings are $\Delta r=2.67\times 10^{-3}, 2.0\times 10^{-3},$ and $1.33\times 10^{-3}$, respectively. The flow is initialized with the constant freestream flow, whose velocity vector is gradually reduced to zero at the cylinder wall. The coarse- and medium-grid cases are run using the dual time-stepping BDF1 scheme, while the numerical solution on the fine grid is obtained using the BDF2 counterpart. All test cases are run until $t=40$, corresponding to the time when the wake becomes fully developed, as one can see in Fig.~\ref{fig:hypcyl-55k-p5-solution}. After that, each test case is integrated for additional $5$ nondimensional time units to compute time-averaged quantities.  Since no appreciable difference is observed between the BDF dual time-stepping and SSPRK3 solutions for this test case, only the BDF results are presented herein.

In Fig.~\ref{fig:hypcyl-55k-p5-wall}, the time-averaged wall pressure and skin friction coefficients computed using the proposed first- and second-order BDF dual time-stepping schemes are compared with the reference solution obtained with the 4th-order hybridized Discontinuous Galerkin (DG) method~\cite{fernandez2018physics}. As shown in the figure, the pressure coefficient obtained with the present scheme is practically identical to that of the reference solution for all grids considered. It should be noted, however, that the skin friction coefficient is a much more sensitive quantity. On the coarsest grid, which is comparable with the $N_{\rm elem}=16,000$ grid used in~\cite{fernandez2018physics}, the difference between the present and reference skin friction coefficients is less than $10\%$, which can be explained by the difference in the time averaging windows. This discrepancy increases as the grid is refined, which can be attributed to the lack of grid resolution in the boundary layer provided by the 16,000-element grid used in \cite{fernandez2018physics}.


\subsection{3D supersonic Taylor-Green vortex flow}

The last test problem is the 3D Taylor--Green vortex (TGV) flow at the Reynolds number of $Re_{\infty}=400$ and two different Mach numbers, $M_{\infty} = 2$ and $10$. This problem is considered to test how the proposed dual time-stepping schemes perform for the essentially unsteady turbulent flow with strong discontinuities. The TGV problem is solved on the periodic cube ($-\pi \leq x,y,z \leq \pi$) with the following initial conditions: 
\begin{align}
  \begin{split}\label{eq:tgv-Re400-init}
    p(x,y,z) &= 1 + \frac{\gamma M_{\infty}^2}{C_{M}}(\cos 2x + \cos 2y) (\cos 2z + 2),\\
    v_x(x,y,z) &= \sin x \cos y \cos z,\\
    v_y(x,y,z) &= - \cos x \sin y \cos z,\\
    v_z(x,y,z) &= 0,\\
    T(x,y,z) &= 1,
  \end{split}
\end{align}
where $C_M$ is a constant introduced to make our nondimensionalization consistent with the one used in~\cite{peng2018effects}. The initial density distribution is computed as $\rho(x,y,z) = p/T$. The following quantities are measured to evaluate accuracy and convergence properties of the proposed dual time-stepping scheme: 
\begin{equation}
\nonumber
  E_k = \frac{(\gamma-1)M_{\infty}^2}{|\Omega|}\int_\Omega\frac{\rho v_i v_i}{2} \text{d}\Omega,
\end{equation}
\begin{equation}
\nonumber
  \varepsilon^S = \frac{(\gamma-1)M_{\infty}^2}{Re|\Omega|}\int_\Omega\mu(T)\omega_i \omega_i \text{d}\Omega,
\end{equation}
\begin{equation}
\nonumber
\label{dilation}
  \varepsilon^D = \frac{4(\gamma-1)M_{\infty}^2}{3Re|\Omega|}\int_\Omega\mu(T)\left(\frac{\partial v_d}{\partial x_d}\right)^2 \text{d}\Omega,
\end{equation}
where $E_k$ is a total kinetic energy, $|\Omega|$ is the domain volume, $ \varepsilon^S$ and $\varepsilon^D$ are solenoidal and dilational contributions to the viscous dissipation rate of the kinetic energy.


\subsubsection{${M_{\infty}=2.0}$ case}


\begin{figure}
  \centering
  \begin{subfigure}{0.48\textwidth}
    \centering
    \includegraphics[width=0.99\textwidth]{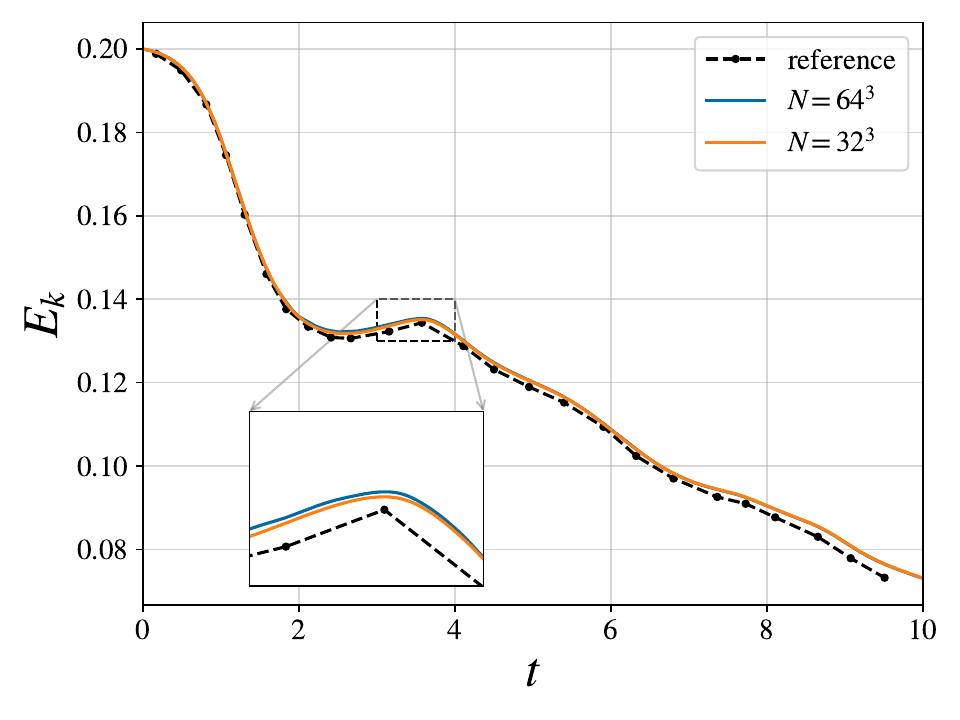}
  \end{subfigure}\hfill
  \begin{subfigure}{0.48\textwidth}
    \centering
    \includegraphics[width=0.99\textwidth]{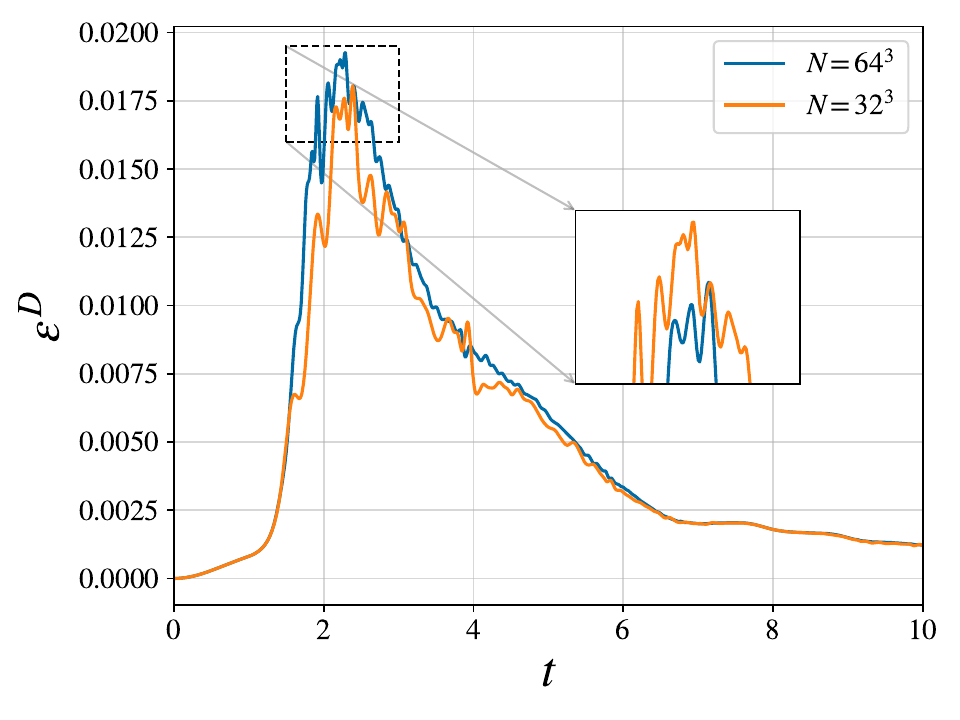}
  \end{subfigure}
  \caption{Time histories of the kinetic energy (left-panel) and the dilation computed with the dual time-stepping BDF2 scheme on the grid with $\Delta t=10^{-3}$ and $N_\text{elem}= 32^3, 64^3$ for the TGV flow at $Re_{\infty}=400$ and $M_{\infty}=2.0$.}
  \label{fig:tgv-Re400-Ma2}
\end{figure}


\begin{table}
  \centering
  \caption{The time-averaged kinetic energy dissipation rates. The reference quantities are taken from~\cite{peng2018effects}.}\label{tab:tgv-kedr-Ma2-Re400}
  \begin{tabular}{l|c}
    $N_\text{elem}$ & $\bar{\varepsilon}^V$\\
    \hline
    $32^3$ & -- \\
    $64^3$ & -- \\
    $128^3$\cite{peng2018effects} & $0.0054$\\
    $256^3$\cite{peng2018effects} & $0.0060$\\
    $512^3$\cite{peng2018effects} & $0.0063$\\
  \end{tabular}\hspace{1em}
  \begin{tabular}{l|ccccc}
    $N_\text{elem}$ & $\bar{\varepsilon}^V$ & $\bar{\varepsilon}^S$ & $\bar{\varepsilon}^D$ \\
    \hline
    $32^3$ & $0.0067$ & $0.0036$ & $0.0030$ \\
    $64^3$ & $0.0069$ & $0.0036$ & $0.0033$ \\
    $128^3$ & --\\
    $256^3$ & --\\
    $512^3$ & --\\
  \end{tabular}\vspace{1em}
\end{table}


First,  we consider the $M_{\infty}=2$ case, for which the parameter $C_{M}$ in Eq.~(\ref{eq:tgv-Re400-init}) is set equal to $89.6$ to match the initial condition in~\cite{peng2018effects}. Time histories of the total kinetic energy (left panel) and the dilational component, $ \varepsilon^D$, computed using the new dual time-stepping BDF2 method on $N_\text{elem}=32^3,64^3$ grids are presented in Figure \ref{fig:tgv-Re400-Ma2}. As follows from this comparison, the present kinetic energy profiles are in excellent agreement with each other and {the} reference solution obtained with the 7/8th-order hybrid Weighted-Essentially Non-Oscillatory (WENO) finite difference scheme on the $N_\text{elem}=512^3$ grid, which is reported in~\cite{peng2018effects}. For the dilational component of the dissipation rate of the kinetic energy shown in Fig.~\ref{fig:tgv-Re400-Ma2}, the discrepancy between the BDF2 solutions on the medium and fine grids is more pronounced for $2\le t \le 3$, thus indicating that the $N_\text{elem}=32^3$ grid resolution is not enough to accurately predict gradient quantities such as $\varepsilon^D$ for this test problem.  We also compare the time-averaged viscous dissipation rate of the kinetic energy and its components with the corresponding quantities presented in~\cite{peng2018effects}. As can be seen, in Table \ref{tab:tgv-kedr-Ma2-Re400},  our results are consistent with each other, i.e.,  $\bar{\varepsilon}^V=\bar{\varepsilon}^S+\bar{\varepsilon}^D$, and agree well with those obtained in ~\cite{peng2018effects} with much larger number of degrees of freedom. 


\subsubsection{${M_{\infty}=10}$ case}


\begin{figure}
  \centering
  \begin{subfigure}{0.49\textwidth}
    \centering
    \includegraphics[height=0.245\textheight]{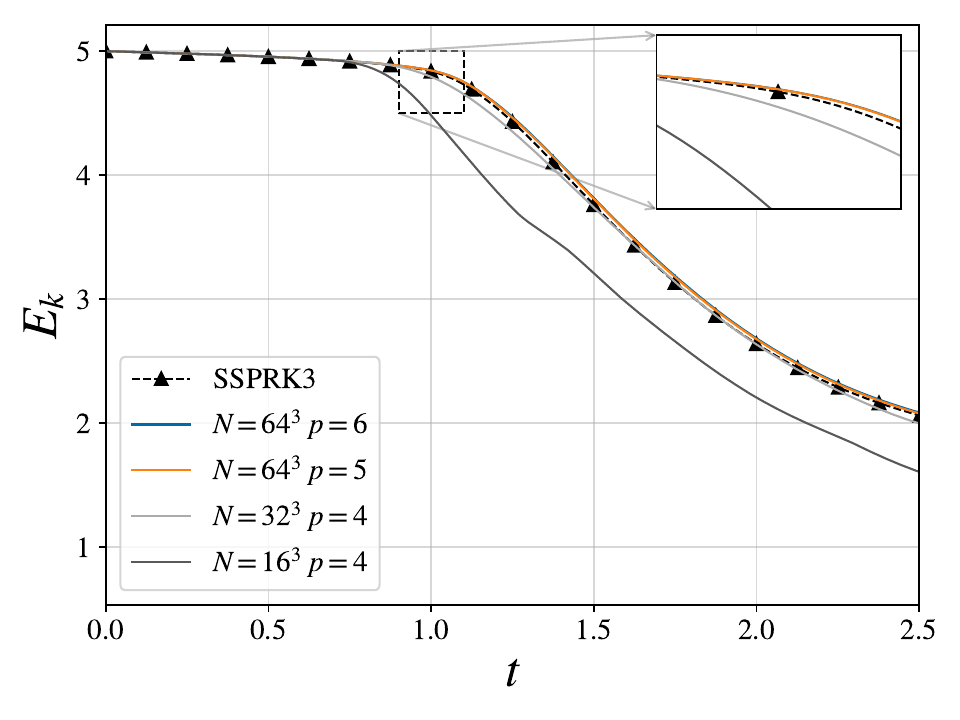}
  \end{subfigure}\hfill
  \begin{subfigure}{0.49\textwidth}
    \centering
    \includegraphics[height=0.245\textheight]{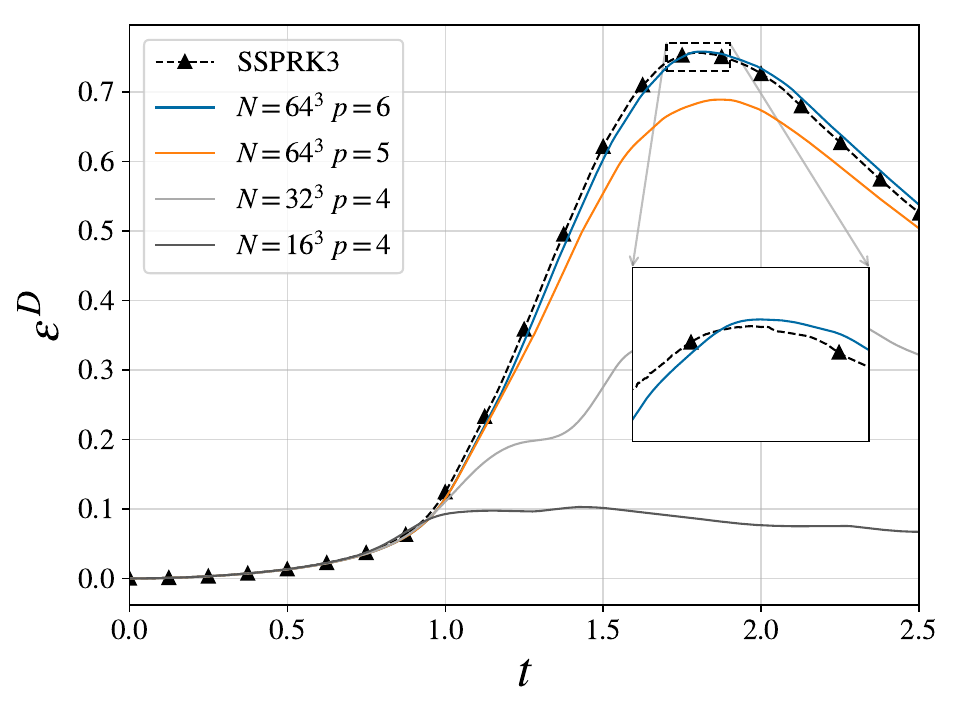}
  \end{subfigure}
  \caption{Time histories of the total kinetic energy (left-panel) and dilation computed with the dual time-stepping BDF2 $p=4,5,6$ and SSPRK3 $p=6$ schemes on $N_\text{elem}=16^3,32^3,64^3$ and $N_\text{elem}=64^3$ grids, respectively, for the TGV flow at $Re_{\infty}=400$ and $M_{\infty}=10$.}
  \label{fig:tgv-Re400-Ma10}
\end{figure}


\begin{figure}
  \centering
  \begin{minipage}{0.49\textwidth}
    \begin{subfigure}{\textwidth}
      \centering
      \includegraphics[height=0.245\textheight]{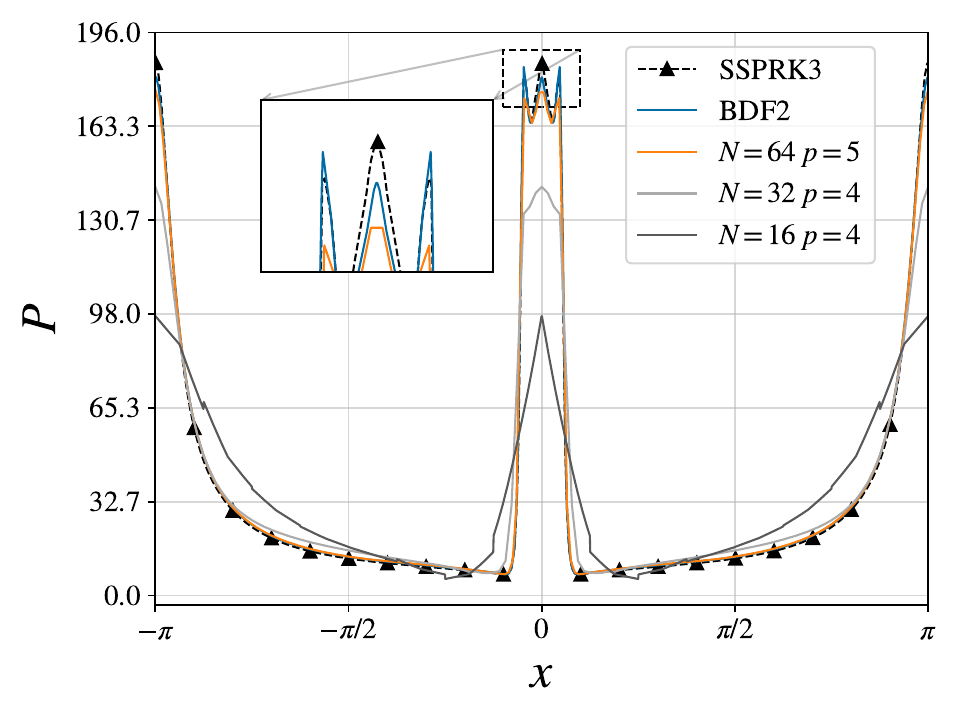}
    \end{subfigure}
  \end{minipage}
  \hfill
  \begin{minipage}{0.49\textwidth}
    \begin{subfigure}{\textwidth}
      \centering
      \includegraphics[height=0.245\textheight]{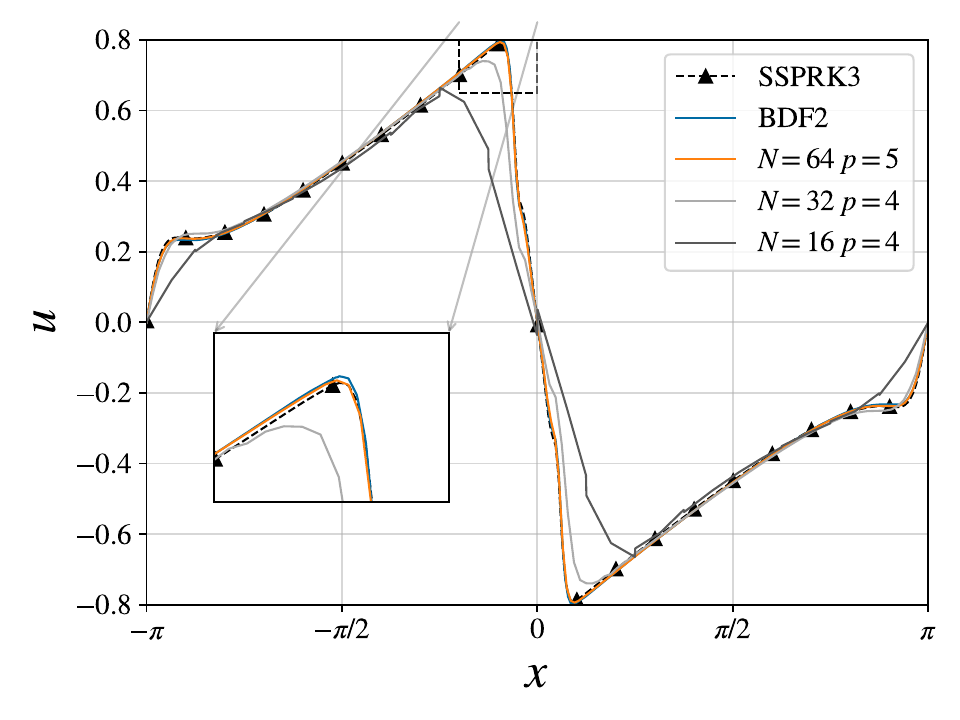}
    \end{subfigure}
  \end{minipage}\\
  \caption{Pressure (left-panel) and $x$-component of the velocity vector profiles along the line $y=\pi$ and $z=0$ obtained with the dual time-stepping BDF2 $N_\text{elem}=16^3,32^3,64^3$ $p=4,5,6$ (the legend BDF2 refers to the $N_\text{elem}=64^3$ $p=6$ case) and SSPRK3 $N_\text{elem}=64^3$ $p=6$ schemes for the TGV flow at $Re_{\infty}=400$ and $M_{\infty}=10$.}
  \label{fig:tgv-Re400-Ma10-primitives}
\end{figure}


No reference solutions or experimental data are available in the literature for the $M_{\infty}=10$ case. Therefore, we compare the BDF2 dual time-stepping results against those obtained with the SSPRK3 counterpart with different polynomial {degrees} and grid resolutions: $(p, N_{\rm elem}) = (4, 16^3), (4, 32^3), (5, 64^3), $ $(6, 64^3)$. Figure~\ref{fig:tgv-Re400-Ma10} shows time histories of the total kinetic energy and the dilational component of the kinetic energy dissipation rate computed with the SSPRK3 and dual time-stepping BDF2 schemes. This comparison shows that the BDF2 $p=4$ solution demonstrates convergence as the grid is refined. Furthermore, the results obtained with the BDF2 and SSPRK3 $p=6$ schemes on the finest $N_\text{elem}=64^3$ grid are nearly identical, as seen in Fig.~\ref{fig:tgv-Re400-Ma10}. This comparison shows that the present dual time-stepping BDF2 scheme provides {excellent} temporal accuracy for gradient quantities such as the dilational component, which is very sensitive to strong shock waves and their interaction with vortices.  We also compare snapshots of the pressure and $x$-component of the velocity vector profiles along the line $(x,y,z)=(x,\pi,0)$, which are shown in Fig.~\ref{fig:tgv-Re400-Ma10-primitives}.  As one can see in this figure, the BDF2 solution is slightly more dissipative than that computed with the SSPRK3 scheme on the same $p=6$, $N_\text{elem}=64^3$ grid. This can be explained by the fact that on average, the physical time step of the BDF2 scheme is $20$ times higher than that of the explicit SSPRK3 scheme. It should also be noted that for this TGV flow, the dual time-stepping BDF2 scheme provides a nearly $30\%$ speed-up in the wall{-}clock time as compared with the SSPRK3 scheme with the same $p=6$ discretization and grid resolution $N_\text{elem}=64^3$ in space.


\section{Conclusions}
\label{sec:conclusions}


In this paper,  we have developed {the} new implicit BDF1 and BDF2 dual time-stepping positivity-preserving entropy{-}stable spectral collocation schemes of arbitrary spatial order of accuracy for solving the 3D compressible Navier{--}Stokes equations. To solve the nonlinear system of discrete equations resulting from the implicit BDF discretization at each physical time step, a dual time-stepping technique with the explicit forward Euler discretization in the pseudotime is used. This dual time-stepping strategy allows us to combine unconditional stability properties of the implicit 1st- and 2nd-order BDF time integrators with the positivity-preserving and entropy stability properties of the baseline explicit spectral collocation scheme, while providing the design-order of accuracy in the physical time. The key distinctive feature of the proposed methodology is that it guarantees the positivity of thermodynamic variables at each pseudotime iteration and imposes no constraints on the physical time step size. Another distinct property of this dual time-stepping scheme is the fact that the upper bound on the pseudotime step required for {the} positivity of density is greater than that of the original explicit Euler scheme in the physical time, thus allowing us to increase the overall efficiency of the method as compared with the explicit counterpart for most test cases considered.


Though the unconditional stability properties of the implicit BDF1 and BDF2 schemes considered in this study are not jeopardized, the new dual time-stepping positivity-preserving entropy{-}stable schemes are still bounded by the explicit stability conditions in the pseudotime. To overcome this CFL-type constraint in the pseudotime and use the unconditional stability of the proposed implicit positivity-preserving entropy{-}stable schemes to their full capacity, positivity-preserving nonlinear solvers are needed, which is the subject of ongoing research.


\section*{Funding}
The second author was supported by the Department of Defense under Grant No. W911NF2310183.


\section*{Data availability}
The datasets generated and/or analyzed during the current study are available from the corresponding author on reasonable request.



\bibliographystyle{elsarticle-num-names} 
\bibliography{references}






\end{document}